\documentclass{aims}
\usepackage{amsmath}
\usepackage{paralist}
\usepackage{graphics} 
\usepackage{epsfig} 
\usepackage{graphicx}  \usepackage{epstopdf}
\usepackage[colorlinks=true]{hyperref}
\usepackage{xcolor}
\hypersetup{urlcolor=blue, citecolor=red}

\textheight=7.8 true in
\textwidth=5.0 true in
\topmargin 28pt
\setcounter{page}{1}



\newtheorem{theorem}{Theorem}[section]

\newtheorem{lemma}[theorem]{Lemma}
\newtheorem{proposition}{Proposition}[section]

\theoremstyle{definition}
\newtheorem{definition}[theorem]{Definition}
\newtheorem{remark}{Remark}

\title[Weak solvability of a fluid-like driven system] 
{Weak solvability a fluid-like driven system for active-passive pedestrian dynamics}

\author[T. K. Thoa Thieu and Matteo Colangeli and Adrian Muntean]{}

\subjclass{MSC: 34B60, 34D20, 35Q35, 35K55, 35K65, 76S05, 76S99}
\keywords{Pedestrian flows, Nonlinear coupling, Forchheimer flows, Double nonlinear parabolic equation.}

\email{thikimthoa.thieu@gssi.it}
\email{matteo.colangeli1@univaq.it }
\email{adrian.muntean@kau.se}



\begin{document}
	\maketitle
	
	\centerline{\scshape T. K. Thoa Thieu$^*$}
	\medskip
	{\footnotesize
		\centerline{Department of Mathematics, Gran Sasso Science Institute,  }
		\centerline{Viale Francesco Crispi 7, L'Aquila 67100, Italy}
		\centerline{Department of Mathematics and Computer Science, Karlstad University,}
		\centerline{Universitetsgatan 2, Karlstad, Sweden}
	} 
	
	\medskip
	
	\centerline{\scshape Matteo Colangeli}
	\medskip
	{\footnotesize
		\centerline{ Dipartimento di Ingegneria e Scienze dell'Informazione e Matematica, Universit\`{a} degli Studi dell'Aquila,}
		\centerline{Via Vetoio, L'Aquila 67100, Italy}
	}
	
	\medskip
	
	\centerline{\scshape Adrian Muntean}
	\medskip
	{\footnotesize
		\centerline{Department of Mathematics and Computer Science, Karlstad University,}
		\centerline{Universitetsgatan 2, Karlstad, Sweden}
	}
	\bigskip
	

	\begin{abstract}
		We study the question of  weak solvability for a nonlinear coupled parabolic system that models the evolution of a complex pedestrian flow. The main feature is that the flow is composed  of a mix of densities of active and passive pedestrians that are moving with different velocities. We rely on special energy estimates and on the use a Schauder's fixed point argument to tackle the existence of solutions to our evolution problem. 
		\end{abstract}
	
	
	\section{Introduction}\label{Sec:Setting}
	In this paper, we study the weak solvability of a coupled system of parabolic equations which are meant to describe the motion of a pedestrian flow in a heterogeneous environment. From the crowd dynamics perspective, the standing assumption is that our pedestrian flow is composed  of two distinct populations: an {\em active} population  -- pedestrians are aware of the details of the environment and move rather fast,  and a {\em passive} population -- pedestrians are not aware of the details of the environment and move therefore rather slow.  See also Figure \ref{fig:fig0}, where we make the analogy with flow in a structured porous media, following an idea by Barenblatt and co-authors cf. \cite{Barenblatt1960}. Mathematically, we investigate an evolution system where a Forchheimer-like equation is nonlinearly-coupled to a diffusion-like equation. For more details on the modeling of the situation, we refer the reader to Section  \ref{Sec:intro} and references mentioned there. 
	
	Let a bounded set $\Omega \neq \emptyset$, $\Omega \subseteq \mathbb{R}^2$ a domain such that $\partial \Omega=\Gamma^N \cup \Gamma^R$, $\Gamma^N \cap \Gamma^R= \emptyset$ with $\mathcal{H}(\Gamma^N) \neq \emptyset$ and $\mathcal{H}(\Gamma^R)\neq \emptyset$, where $\mathcal{H}$ denotes the surface measure on $\Gamma^N, \Gamma^R$ and take $S=(0,T)$. Find the pair $(u,v)$, where  $u: S \times \Omega \longrightarrow \mathbb{R}^2$ and $v: S\times\Omega \longrightarrow \mathbb{R}^2$,  satisfying the following model equations 
	\begin{align}\label{main_eq}
	\begin{cases}
	\partial_t (u^\lambda) + \mathrm{div}(-K_1(|\nabla u|)\nabla u) = -b(u-v) \ \text{ in } S \times \Omega,\\
	\partial_t v - K_2\Delta v =b(u-v) \ \text{ in } S \times \Omega, \\
	-K_1(|\nabla u|)\nabla u \cdot {\bf n} = \varphi u^\lambda \ \text{ at } S \times\Gamma^R,\\
	-K_1(|\nabla u|)\nabla u\cdot {\bf n} = 0 \ \text{ at } S\times\Gamma^N ,\\
	-K_2\nabla v \cdot {\bf n} = 0 \ \text{ at }  S\times \partial\Omega, \\
	u(t=0,x) = u_0(x), \ x\in  \bar{\Omega}, \\
	v(t=0,x) = v_0(x), \ x\in\bar{\Omega}.
	\end{cases}
	\end{align}
	Here $K_2 >0$, 
	 while the function $K_1$ is linked to the derivation of a nonlinear version of Darcy's law involving   a polynomial with non-negative coefficients in velocities. This choice is rather non-standard, see e.g. the works  \cite{Hoang2011}, \cite{Aulisa2009}, \cite{Celik2016} and references cited therein for more details in this sense. In addition, $\lambda \in (0,1]$ is a fixed number and $b(\cdot)$ is a sink/source term. The nonlinear structure of $K_1$ is described in Section \ref{Sec:Pre} together with the remaining model parameters entering (\ref{main_eq}) which are not explained here, as well as with the assumptions needed to ensure the existence of solutions to our problem. 
	
	\begin{figure}
		\centering
		\includegraphics[width=0.7\textwidth]{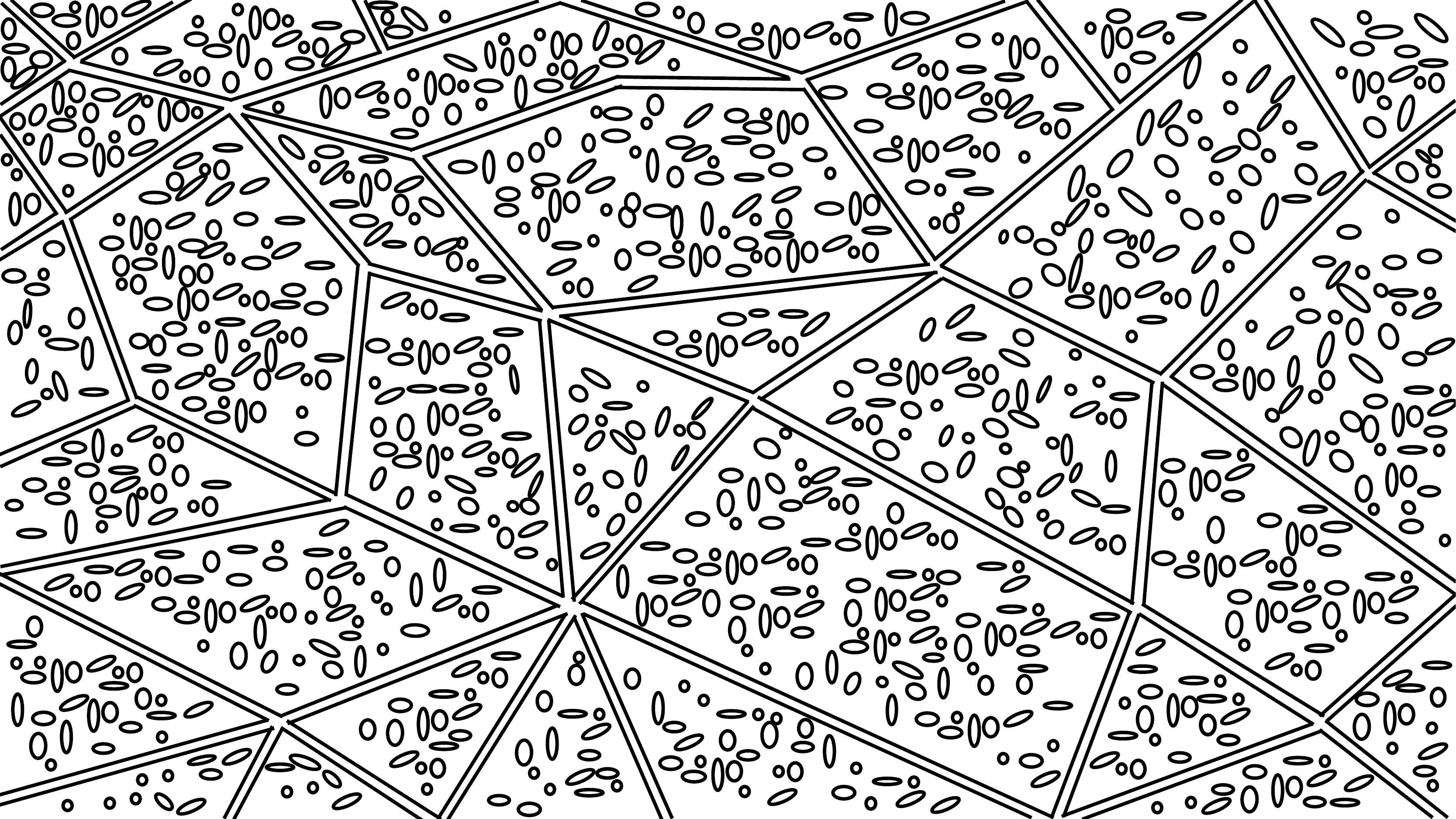}
		\caption{Sketch of a distributed flow through a fissured rock, scenario mimicking Fig.1 from \cite{Barenblatt1960}. The fissured rock consists of pores and permeable blocks, generally speaking blocks are separated from each other by a system of fissures. Through the fissures, the flow is faster compared to the rest of the media.}
		\label{fig:fig0}
	\end{figure}
	
	\section{Some background on the problem (\ref{main_eq})}\label{Sec:intro}
	
	The modeling, analysis and simulation of pedestrian flows offers many  challenging questions to science and technology in general, and to mathematics in particular. Our  interest in this context is to study mixed pedestrian flows where the dynamics of interacting agents stems from two distinct populations: active agents that follow a predetermined velocity field and  passive
	agents that have no preferred direction of motion. 
	
	There are several ways to approach such scenarios. One possible route has been studied in \cite{Richardson2019, Colangeli2019}, where the authors considered a non-linear system SDEs coupled with a linear parabolic equation to describe the escape evacuation dynamics of  active and passive pedestrians moving through smoke (i.e. through regions with reduced visibility). A lattice model is explored in \cite{Cirillo2019} to search for eventual drafting/aerodynamic drag effects.

	
In this paper, we imagine that the active population of pedestrians have velocities similar to a non-Darcy flow,  namely, a generalized Forchheimer flow as for  slightly compressible fluids. Some of the mathematical properties of  this type of flow have been already investigated, for instance, in \cite{Aulisa2009, Hoang2011, Celik2016}, and we are benefitting here of this background.  On the other hand, we consider flow of the passive population as a diffusion process, hence no predetermined flow directions are prefered. The coupling between the two flows is done like in \cite{Barenblatt1960}.
	
	

	From a micro-to-macro perspective, it is worth also noting that a a generalized Forchheimer flow model (i.e. the first partial differential equation in  (\ref{main_eq}))  can be obtained in principle via homogenization techniques (like in  \cite{Lions2011}, e.g.), but it is not clear at this stage how would look like a suitable microscopic model defined at the level of the geometry depicted in Figure \ref{fig:fig0}. 
	
	 This paper is organized as follows. In Section \ref{Sec:Pre}, some preliminaries and assumptions are provided. Then, we prove the special energy estimates in Section \ref{Sec:Energy}. Finally, the existence of solutions to the problem \eqref{main_eq} is established in Section \ref{existence}.
	
	\section{Preliminaries. List of assumptions}\label{Sec:Pre}
	
	We list in this section a couple of preliminary results (mostly inequalities and compactness results) as well as our assumption on data and parameters.
	\begin{lemma} Let $x, y \in [0,\infty)$. Then the following elementary inequalities hold:
		\begin{equation}\label{pre-ine-1}
		(x+y)^p \leq 2^p(x^p+y^p) \text{ for all } p >0, 
		\end{equation}
		\begin{equation}\label{pre-ine-2}
		(x+y)^p \leq x^p+y^p \text{ for all } 0 <p \leq 1,
		\end{equation}
		\begin{equation}\label{pre-ine-3}
		(x+y)^p \leq 2^{p-1}(x^p+y^p) \text{ for all } p \geq 1,
		\end{equation}
		\begin{equation}\label{pre-ine-4}
		x^{\beta} \leq x^{\alpha} + x^{\gamma} \text{ for all } 0 \leq \alpha \leq \beta \leq \gamma,
		\end{equation}
		\begin{equation}\label{pre-ine-5}
		x^\beta \leq 1+x^{\gamma} \text{ for all } 0 \leq \beta \leq \gamma.
		\end{equation}
	\end{lemma}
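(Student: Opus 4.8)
The plan is to treat the five inequalities in two groups, according to whether the power function $t \mapsto t^p$ is viewed as a function of its base or of its exponent. Inequalities \eqref{pre-ine-1}--\eqref{pre-ine-3} belong to the first group. For \eqref{pre-ine-2} I would use the subadditivity of $t \mapsto t^p$ for $0 < p \le 1$: assuming $x + y > 0$ (the case $x = y = 0$ being trivial), normalize by setting $s = x/(x+y)$ and $t = y/(x+y)$, so that $s, t \in [0,1]$ and $s + t = 1$; since $0 < p \le 1$ we have $s^p \ge s$ and $t^p \ge t$, whence $s^p + t^p \ge s + t = 1$, and multiplying through by $(x+y)^p$ gives the claim. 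For \eqref{pre-ine-3} I would instead invoke the convexity of $t \mapsto t^p$ for $p \ge 1$, which by Jensen's inequality yields $\bigl(\tfrac{x+y}{2}\bigr)^p \le \tfrac12(x^p + y^p)$; multiplying by $2^p$ produces the factor $2^{p-1}$. The coarser bound \eqref{pre-ine-1}, valid for every $p > 0$, I would obtain directly from $x + y \le 2\max\{x,y\}$, raising to the power $p$ and using $(\max\{x,y\})^p \le x^p + y^p$.

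The second group, \eqref{pre-ine-4}--\eqref{pre-ine-5}, rests on monotonicity in the exponent, and I would argue by splitting on the position of $x$ relative to $1$. If $0 \le x \le 1$, a larger exponent decreases the power, so $x^\beta \le x^\alpha \le x^\alpha + x^\gamma$; if $x \ge 1$, a larger exponent increases the power, so $x^\beta \le x^\gamma \le x^\alpha + x^\gamma$. Together these two cases settle \eqref{pre-ine-4}, and \eqref{pre-ine-5} follows as the special case $\alpha = 0$, upon writing $x^0 = 1$.

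None of these estimates poses a genuine obstacle, since all five are elementary and reduce to the convexity, subadditivity, or monotonicity of a single power function; the only points requiring a little care are the degenerate boundary cases, namely $x = y = 0$ in \eqref{pre-ine-2} and the convention $0^0 = 1$ implicit in \eqref{pre-ine-4}--\eqref{pre-ine-5}, which I would dispatch separately.
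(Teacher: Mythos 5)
Your proof is correct: all five inequalities are established by the standard arguments (subadditivity of $t\mapsto t^p$ for $0<p\le 1$ via normalization, convexity and Jensen for $p\ge 1$, the $\max$ bound for general $p>0$, and case-splitting on $x\lessgtr 1$ for the exponent-monotonicity estimates), and you handle the degenerate cases appropriately. The paper itself states this lemma without any proof, treating the inequalities as known elementary facts, so your argument supplies exactly the routine verification the authors omitted.
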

	\begin{lemma}[\bf A trace inequality]\label{trace-lm}
		Let $\lambda \in (0,1]$, $\delta =1-\lambda$, $a  \in (0,1)$, $a>\delta$, $\alpha \geq 2-\delta$, $\alpha \leq 2$, $\mu_0=\frac{a-\delta}{1-a}$, $\alpha_{\star}=\frac{n(a-\delta)}{2-a}$ and 
		\begin{align}
		\theta = \theta_\alpha := \frac{1}{(1-a)(\alpha/\alpha_{\star}-1)} \in (0,1).
		\end{align}
		Then it exists $C>0$ such that the following estimate holds
		\begin{align}\label{trace-theo}
		\int_{\Gamma^R}|u|^\alpha d\sigma \leq 2\varepsilon \int_{\Omega} |u|^{\alpha+\delta-2}|\nabla u|^{2-a}dx + C\|u\|_{L^{\alpha}(\Omega)}^\alpha \nonumber\\+ C\varepsilon^{-\frac{1}{1-a}}\|u\|_{L^{\alpha}(\Omega)}^{\alpha+\mu_0} + C\varepsilon^{-\mu_2}\|u\|_{L^\alpha(\Omega)}^{\alpha+\mu_1},
		\end{align}
		where
		\begin{align}
		\mu_1&=\mu_{1,\alpha} := \frac{\mu_0(1+\theta(1-a))}{1-\theta},\\
		\mu_2&=\mu_{2,\alpha}:=\frac{1}{1-a}+\frac{\theta(2-a)}{(1-\theta)(1-a)}.
		\end{align}
		\end{lemma}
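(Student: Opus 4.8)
This is a trace inequality that bounds the boundary $L^\alpha$ norm in terms of an interior term involving $|u|^{\alpha+\delta-2}|\nabla u|^{2-a}$ (which matches the natural energy from the $p$-Laplacian-like structure with $p=2-a$) plus various powers of the interior $L^\alpha$ norm. The presence of the factor $2\varepsilon$ on the gradient term and the $\varepsilon$-dependent coefficients suggests this is a Young/interpolation-type inequality where the gradient term can be made arbitrarily small at the cost of blowing up the lower-order terms.

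**Key structural observations.** Let me think about the exponents. We have $p := 2-a$, and the interior gradient term is $\int_\Omega |u|^{\alpha+\delta-2}|\nabla u|^{p}\,dx$. Setting $w := |u|^{(\alpha+\delta)/p}$ (or some power), I'd get $\nabla w \sim |u|^{(\alpha+\delta)/p - 1}\nabla u$, so $|\nabla w|^p \sim |u|^{\alpha+\delta-p}|\nabla u|^p = |u|^{\alpha+\delta-2+a}|\nabla u|^p$. That's not quite the exponent $\alpha+\delta-2$. Let me reconsider: perhaps $w = |u|^{\gamma}$ with $\gamma p = \alpha$ on the boundary side, i.e. $\gamma = \alpha/p = \alpha/(2-a)$. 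Then $\int_{\Gamma^R}|w|^p = \int_{\Gamma^R}|u|^\alpha$. And $|\nabla w|^p = \gamma^p |u|^{(\gamma-1)p}|\nabla u|^p = \gamma^p|u|^{\alpha-p}|\nabla u|^p = \gamma^p |u|^{\alpha+a-2}|\nabla u|^{2-a}$. That's still off by $\delta$ versus $a$. The appearance of both $a$ and $\delta$ with the constraint $a>\delta$ tells me the substitution must be chosen more carefully, or there's an additional Hölder step distributing the powers of $|u|$.

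**Plan of attack.** My plan is as follows. First I would introduce an auxiliary function $w=|u|^{\gamma}$ for a suitable $\gamma$ and apply the standard trace theorem $\|w\|_{L^p(\Gamma^R)}\lesssim \|w\|_{W^{1,p}(\Omega)}$ (or a Gagliardo–Nirenberg trace embedding with the fractional parameter $\theta$, which is surely where $\theta$ enters). The exponent $p=2-a\in(1,2)$ is admissible since $a\in(0,1)$. This converts the boundary integral into $\|\nabla w\|_{L^p(\Omega)}^p + \|w\|_{L^p(\Omega)}^p$ up to constants. Second, I would expand $\nabla w$ and recognize $\|\nabla w\|_{L^p}^p$ as (a multiple of) the interior energy term, but with the "wrong" power $|u|^{\alpha+a-2}$; to fix the discrepancy between $a$ and $\delta$, I would interpolate using Hölder's inequality to split $|u|^{\alpha+a-2}|\nabla u|^p$ into the desired $|u|^{\alpha+\delta-2}|\nabla u|^{2-a}$ times an excess power of $|u|$, the latter absorbed into $L^\alpha(\Omega)$ norms. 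Third, I would apply Young's inequality with exponent pair governed by $\theta$ to each product, pulling out the small factor $2\varepsilon$ on the gradient energy and generating the compensating negative powers $\varepsilon^{-1/(1-a)}$ and $\varepsilon^{-\mu_2}$; matching the resulting $L^\alpha$ powers against $\alpha+\mu_0$, $\alpha+\mu_1$ fixes the formulas for $\mu_1,\mu_2$ in terms of $\mu_0=\frac{a-\delta}{1-a}$ and $\theta$.

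**The main obstacle.** The genuinely delicate part will be bookkeeping the three coupled Young/Hölder steps so that the exponents on $\|u\|_{L^\alpha(\Omega)}$ come out to be exactly $\alpha$, $\alpha+\mu_0$, and $\alpha+\mu_1$, and the $\varepsilon$-powers come out to be exactly $\varepsilon^{-1/(1-a)}$ and $\varepsilon^{-\mu_2}$ — rather than some other admissible but differently-parametrized constants. In particular, verifying that the interpolation parameter $\theta=\theta_\alpha$ defined through $\alpha_\star=\frac{n(a-\delta)}{2-a}$ lands in $(0,1)$ under the stated constraints $\alpha\ge 2-\delta$, $\alpha\le 2$, $a>\delta$, and that this is precisely the Gagliardo–Nirenberg exponent making the trace embedding scale correctly, is where the constraints in the hypotheses get used. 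I expect the calculation to reduce, after taking logarithms of the scaling relations, to linear identities in the exponents that force the stated values of $\mu_1$ and $\mu_2$; the conceptual content is modest, but the algebra must be driven carefully to land on exactly these constants.
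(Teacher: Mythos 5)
A preliminary point: the paper contains no proof of this lemma --- immediately after the statement it says that for the proof one should see Lemma 2.2 of \cite{Celik2016}, so your attempt must be measured against the argument imported from that reference. That argument is, in outline, of the same general type you propose: (i) a trace/divergence-theorem step of the form $\int_{\Gamma^R}|u|^{\alpha}d\sigma \leq C\int_{\Omega}|u|^{\alpha}dx + C\int_{\Omega}|u|^{\alpha-1}|\nabla u|\,dx$; (ii) a pointwise Young inequality with the conjugate pair $2-a$ and $\frac{2-a}{1-a}$, applied to the splitting $|u|^{\alpha-1}|\nabla u| = \bigl(|u|^{\frac{\alpha+\delta-2}{2-a}}|\nabla u|\bigr)\cdot|u|^{\alpha-1-\frac{\alpha+\delta-2}{2-a}}$, which produces $\varepsilon\int_{\Omega}|u|^{\alpha+\delta-2}|\nabla u|^{2-a}dx + C\varepsilon^{-\frac{1}{1-a}}\int_{\Omega}|u|^{\alpha+\mu_0}dx$, the identity $\bigl(\alpha-1-\frac{\alpha+\delta-2}{2-a}\bigr)\frac{2-a}{1-a}=\alpha+\frac{a-\delta}{1-a}$ being exactly where $\mu_0$ comes from; and (iii) an interpolation of the $L^{\alpha+\mu_0}$ norm between $L^{\alpha}$ and a Sobolev norm of the auxiliary power $|u|^{\frac{\alpha+\delta-a}{2-a}}$ (whose $W^{1,2-a}$ seminorm raised to $2-a$ is precisely the energy term), followed by a second Young step; the two $\varepsilon$-contributions explain the factor $2\varepsilon$, and the second one carries $\varepsilon^{-\mu_2}$ and the power $\alpha+\mu_1$. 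So your strategic instincts (trace embedding, H\"older/Young, Gagliardo--Nirenberg interpolation with $\theta$ as the interpolation parameter) point in the right direction.

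Nevertheless, as submitted your proposal has a genuine gap: it is a plan, not a proof, and the part you explicitly defer (``I expect the calculation to reduce $\ldots$ the algebra must be driven carefully'') is the entire content of the lemma, which consists of nothing but the exact values of $\mu_0$, $\mu_1$, $\mu_2$ and the admissibility $\theta\in(0,1)$ under the hypotheses $a>\delta$, $2-\delta\leq\alpha\leq 2$. None of this arithmetic is performed, so nothing is verified. Moreover, the one concrete step you do commit to is a problematic opening move: applying the $W^{1,2-a}$ trace embedding to $w=|u|^{\alpha/(2-a)}$ yields, as you yourself note, the mismatched weight $|u|^{\alpha+a-2}$, and your proposed repair --- factoring $|u|^{\alpha+a-2}|\nabla u|^{2-a}=\bigl(|u|^{\alpha+\delta-2}|\nabla u|^{2-a}\bigr)\cdot|u|^{a-\delta}$ and ``absorbing the excess into $L^{\alpha}$ norms'' --- does not go through as stated, because the excess power multiplies the full energy density: H\"older on this product returns a fractional power of the energy integral itself times an $L^{q}$ norm of $u$ with $q$ dictated by duality, and one must then run Young again with the correct conjugate pair to linearize the energy, which is exactly the bookkeeping you have omitted; whether the resulting powers land on $\alpha+\mu_0$ is not checked. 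The cleaner opening that avoids the mismatch entirely is the $W^{1,1}$-type trace applied to $|u|^{\alpha}$ as in step (i) above. Finally, $\alpha_{\star}=\frac{n(a-\delta)}{2-a}$ involves the spatial dimension $n$ (here $n=2$), and the condition $\alpha>\alpha_{\star}$ needed for $\theta$ to be well defined and positive is never examined in your sketch; this admissibility check is part of what the hypotheses are for and is likewise missing.
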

	For the proof of Lemma $2.2$, see Lemma $2.2$  in \cite{Celik2016}. Here $a = \frac{\alpha_N}{\alpha_N+1} \in (0,1)$, with  $\alpha_N$ cf. (\ref{g}). 
	
	\begin{theorem}[\bf Rellich-Kondrachov Compactness Theorem \cite{Evans1998}]\label{rellich-theo}
		Suppose $\Omega$ is bounded open subset of $\mathbb{R}^d$ and $\partial \Omega$ is $C^1$. If $1 \leq p <d$, then 
		$W^{1,p}(\Omega) \hookrightarrow\hookrightarrow L^q(\Omega)$ for each $1\leq q <p^{\star}$ with $p^{\star} = \frac{pd}{d-p}$.
	\end{theorem}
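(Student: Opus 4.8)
The plan is to establish compactness by the classical mollification-plus-Arzel\`a--Ascoli scheme, reducing the statement for a general subcritical exponent $q$ to an $L^1$ estimate via interpolation. Throughout, let $\{u_m\}_{m}$ be an arbitrary sequence bounded in $W^{1,p}(\Omega)$, say $\|u_m\|_{W^{1,p}(\Omega)} \le M$; the goal is to extract a subsequence converging strongly in $L^q(\Omega)$, which is exactly what compactness of the embedding asserts.

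First I would pass to the whole space. Because $\Omega$ is bounded and $\partial\Omega$ is $C^1$, the Sobolev extension operator produces $\bar u_m \in W^{1,p}(\mathbb{R}^d)$ supported in a fixed bounded open set $V$ compactly containing $\Omega$, with $\|\bar u_m\|_{W^{1,p}(\mathbb{R}^d)} \le C M$; I keep writing $u_m$ for $\bar u_m$. Next I would invoke the Gagliardo--Nirenberg--Sobolev inequality to secure a uniform bound $\|u_m\|_{L^{p^\star}(V)} \le C M$, so that the borderline exponent is controlled even though the embedding into $L^{p^\star}$ is itself \emph{not} compact. This uniform $L^{p^\star}$ bound is the ingredient that later renders the subcritical range $q<p^\star$ accessible.

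The core step is a mollification estimate. For a standard mollifier $\eta_\varepsilon$ set $u_m^\varepsilon := \eta_\varepsilon * u_m$. Writing the difference through the fundamental theorem of calculus yields the $m$-uniform bound
\begin{equation}
\|u_m^\varepsilon - u_m\|_{L^1(V)} \le \varepsilon\, \|\nabla u_m\|_{L^1(V)} \le C\varepsilon,
\end{equation}
which I would then upgrade to $L^q$ by interpolating between $L^1$ and $L^{p^\star}$: since $1 \le q < p^\star$ there is $\theta \in (0,1]$ with $\frac1q = \theta + \frac{1-\theta}{p^\star}$, whence
\begin{equation}
\|u_m^\varepsilon - u_m\|_{L^q(V)} \le \|u_m^\varepsilon - u_m\|_{L^1(V)}^{\theta}\,\|u_m^\varepsilon - u_m\|_{L^{p^\star}(V)}^{1-\theta} \le C\,\varepsilon^{\theta}.
\end{equation}
The uniform $L^{p^\star}$ bound makes the second factor harmless. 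For each fixed $\varepsilon$, the family $\{u_m^\varepsilon\}_m$ is uniformly bounded and equicontinuous (indeed uniformly Lipschitz, with constants depending on $\varepsilon$ through $\|\nabla\eta_\varepsilon\|$ and the $L^p$ bound), so Arzel\`a--Ascoli furnishes a subsequence converging uniformly on $\bar V$, hence in $L^q(V)$.

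Finally I would assemble these pieces by a total-boundedness argument. Given $\delta>0$, choose $\varepsilon$ so small that $\|u_m^\varepsilon - u_m\|_{L^q(V)} < \delta/3$ for all $m$; precompactness of $\{u_m^\varepsilon\}_m$ then lets me cover it by finitely many $L^q$-balls of radius $\delta/3$, and the triangle inequality shows $\{u_m\}_m$ is covered by finitely many balls of radius $\delta$. Running this for $\delta = 1/k$ and diagonalizing produces a subsequence that is Cauchy in $L^q(V)$, and restriction to $\Omega$ gives the strong limit. I expect the main obstacle to be the technical heart of the scheme, namely the mollification estimate uniform in $m$ together with the interpolation that transfers $L^1$ control to $L^q$ across the \emph{full} subcritical range; the extension step is where the $C^1$ regularity of $\partial\Omega$ is genuinely used, and the failure of compactness precisely at $q=p^\star$ is the reason the strict inequality cannot be relaxed.
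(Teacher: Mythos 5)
Your argument is correct and coincides with the standard proof in \cite{Evans1998}, which is exactly the source the paper cites for this theorem without reproducing a proof of its own: extension across the $C^1$ boundary, the uniform $L^{p^{\star}}$ bound via Gagliardo--Nirenberg--Sobolev, the $m$-uniform mollification estimate in $L^1$ upgraded to $L^q$ by interpolation, Arzel\`a--Ascoli for each fixed $\varepsilon$, and the $\delta/3$ total-boundedness plus diagonalization. The only cosmetic omission is that the bound $\|u_m^\varepsilon-u_m\|_{L^1}\leq \varepsilon\|\nabla u_m\|_{L^1}$ is first proved for smooth functions and extended by density, a step Evans also notes only in passing.
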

	\begin{theorem}[\bf Aubin-Lions compactness Theorem \cite{Aubin1963}]\label{Aubin-theo} Let $E_0 \hookrightarrow \hookrightarrow E \hookrightarrow E_1$ be three Banach spaces. Suppose that $E_0$ is compactly embedded in $E$ and $E$ is continuously embedded in $E_1$. Then 
		\begin{align*}
		W=\left\{u\in L^p(S; E_0) \text{ and } u_t \in L^q(S; E_1) \right\}
		\end{align*}
		is compactly embedded into $L^p(S; E)$.
	\end{theorem}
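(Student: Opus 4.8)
The plan is to follow the classical route through Ehrling's interpolation lemma combined with a Fréchet–Kolmogorov (Simon-type) compactness criterion for Bochner spaces. Let $(u_n)$ be a bounded sequence in $W$, say
\[
\|u_n\|_{L^p(S;E_0)} + \|\partial_t u_n\|_{L^q(S;E_1)} \leq M,
\]
and set out to extract a subsequence that converges strongly in $L^p(S;E)$. The whole argument splits into an interpolation step that reduces the problem to compactness in the weaker space $L^p(S;E_1)$, followed by a time-regularity step that uses the control on $\partial_t u_n$.

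First I would establish \emph{Ehrling's lemma}: since $E_0\hookrightarrow\hookrightarrow E\hookrightarrow E_1$, for every $\eta>0$ there is $C_\eta>0$ with
\[
\|w\|_E \leq \eta\,\|w\|_{E_0} + C_\eta\,\|w\|_{E_1}\qquad\text{for all } w\in E_0.
\]
This is proved by contradiction: a failure would produce $w_k$ with $\|w_k\|_E=1$, $\|w_k\|_{E_0}$ bounded and $\|w_k\|_{E_1}\to 0$; the compact embedding gives $w_k\to w$ in $E$ along a subsequence, while the continuous embedding into $E_1$ forces $w=0$, contradicting $\|w\|_E=1$. Integrating in time, the lemma yields
\[
\|u_n-u_m\|_{L^p(S;E)} \leq 2M\eta + C_\eta\,\|u_n-u_m\|_{L^p(S;E_1)}.
\]
Choosing $\eta$ small makes the first term arbitrarily small, so it suffices to show that $(u_n)$ is relatively compact in $L^p(S;E_1)$; any subsequence that is Cauchy there is then Cauchy in $L^p(S;E)$.

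To verify relative compactness in $L^p(S;E_1)$ I would check the two conditions of the vector-valued Fréchet–Kolmogorov theorem. For the mean-value condition, Bochner's inequality and Hölder in $t$ give $\big\|\int_{t_1}^{t_2}u_n(t)\,dt\big\|_{E_0}\leq (t_2-t_1)^{1/p'}\|u_n\|_{L^p(S;E_0)}$, so the time-averages are bounded in $E_0$ and hence precompact in $E_1$ by the compact embedding $E_0\hookrightarrow\hookrightarrow E\hookrightarrow E_1$. For the equicontinuity-of-translates condition, I write $u_n(t+h)-u_n(t)=\int_t^{t+h}\partial_s u_n(s)\,ds$ and estimate
\[
\|u_n(t+h)-u_n(t)\|_{E_1} \leq \int_t^{t+h}\|\partial_s u_n(s)\|_{E_1}\,ds,
\]
so that, after applying Hölder in $s$ and integrating over $t\in(0,T-h)$, the translate norm $\|u_n(\cdot+h)-u_n(\cdot)\|_{L^p(0,T-h;E_1)}$ is bounded by a positive power of $h$ times $\|\partial_t u_n\|_{L^q(S;E_1)}$, uniformly in $n$. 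These two facts deliver a subsequence converging in $L^p(S;E_1)$, which by the preceding paragraph converges in $L^p(S;E)$.

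The step I expect to be the main obstacle is the time-regularity estimate and its honest conversion into the Fréchet–Kolmogorov framework: one must track the exact power of $h$ produced by the Hölder/Young steps and thereby pin down the admissible ranges of the exponents $p$ and $q$ (in particular whether $q>1$ is needed for the translate bound, and the borderline behaviour at $q=\infty$ or $p=\infty$). Once Ehrling's lemma and the mean-value precompactness are in place, the remainder is soft functional analysis.
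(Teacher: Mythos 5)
You should first note that the paper itself gives no proof of this statement: Theorem \ref{Aubin-theo} is quoted as background and attributed directly to \cite{Aubin1963}, so the only meaningful comparison is with the classical literature --- and your sketch is exactly the standard argument found there (Ehrling's interpolation lemma to reduce compactness in $L^p(S;E)$ to compactness in $L^p(S;E_1)$, followed by a vector-valued Fr\'echet--Kolmogorov/Simon criterion in which the bound on $\partial_t u_n$ furnishes the translate estimate). The sketch is sound, and you correctly locate the genuine work: the theorem as reproduced in the paper, with no restriction on $p$ and $q$, is slightly too generous. One needs $p<\infty$ (for $p=\infty$ the conclusion must be replaced by compactness into $C(\bar{S};E)$ and requires $q>1$), and your H\"older step gives $\|u_n(\cdot+h)-u_n\|_{L^p(0,T-h;E_1)}\leq C\,h^{1/q'}$, which degenerates at $q=1$; the case $q=1$ still holds, but via the Fubini computation $\int_0^{T-h}\|u_n(t+h)-u_n(t)\|_{E_1}\,dt\leq h\,\|\partial_t u_n\|_{L^1(S;E_1)}$, i.e.\ an $L^1$-in-time translate bound of order $h$, which must then be combined with the $L^p(S;E_0)$ bound and Ehrling's lemma to recover smallness in $L^p(S;E)$ --- this is precisely Simon's refinement of Aubin--Lions. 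Two small points to make explicit in a complete write-up: the identity $u_n(t+h)-u_n(t)=\int_t^{t+h}\partial_s u_n(s)\,ds$ presupposes that $u_n$ coincides a.e.\ with an absolutely continuous $E_1$-valued function, which follows since $u_n\in L^1(S;E_1)$ has distributional derivative in $L^1(S;E_1)$; and in the mean-value condition the averages need only be precompact in $E_1$, which holds because the composed embedding $E_0\hookrightarrow E_1$ is compact. With these caveats stated, your proposal is a correct proof of the (properly restricted) statement, and incidentally shows that the version printed in the paper should carry the hypotheses $1\leq p<\infty$, $q\geq 1$.
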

	\begin{theorem}[\bf Schauder's Fixed Point Theorem \cite{Zeidler1986}]\label{Schauder-theo} Let $B$ be a nonempty, closed, bounded, convex subset of a Banach space $X$, and suppose: $\mathcal{T}: B \longrightarrow B$ is a compact operator. Then $\mathcal{T}$ has a fixed point.
		
	\end{theorem}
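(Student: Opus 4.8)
The plan is to reduce this infinite-dimensional statement to Brouwer's fixed point theorem through a finite-dimensional approximation scheme built on the compactness of $\mathcal{T}$. First I would set $K := \overline{\mathcal{T}(B)}$, which is a compact subset of $B$: it is compact because $\mathcal{T}$ is a compact operator and $B$ is bounded, and it sits inside $B$ because $\mathcal{T}(B) \subseteq B$ while $B$ is closed. The guiding idea is that, although $B$ may be infinite-dimensional, the range of $\mathcal{T}$ is essentially finite-dimensional up to an arbitrarily small error, so Brouwer's theorem can be applied on each finite-dimensional layer and the genuine fixed point recovered in the limit.

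For a fixed $\varepsilon > 0$, I would use the total boundedness of $K$ to select finitely many centers $y_1, \dots, y_n \in K$ whose $\varepsilon$-balls cover $K$, and then build the Schauder projection $P_\varepsilon \colon K \to \operatorname{conv}\{y_1,\dots,y_n\}$ by
\begin{align*}
P_\varepsilon(y) = \frac{\sum_{i=1}^n m_i(y)\, y_i}{\sum_{i=1}^n m_i(y)}, \qquad m_i(y) := \max\{0,\ \varepsilon - \|y - y_i\|\}.
\end{align*}
The denominator never vanishes on $K$ since every $y$ lies within $\varepsilon$ of some center, so $P_\varepsilon$ is continuous and obeys $\|P_\varepsilon(y) - y\| \le \varepsilon$ for all $y \in K$. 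Writing $C_\varepsilon := \operatorname{conv}\{y_1,\dots,y_n\}$, a compact convex subset of a finite-dimensional subspace that lies inside $B$ by convexity, the composition $\mathcal{T}_\varepsilon := P_\varepsilon \circ \mathcal{T}$ maps $C_\varepsilon$ continuously into itself. Brouwer's fixed point theorem then supplies $x_\varepsilon \in C_\varepsilon$ with $P_\varepsilon(\mathcal{T}(x_\varepsilon)) = x_\varepsilon$, whence $\|\mathcal{T}(x_\varepsilon) - x_\varepsilon\| = \|\mathcal{T}(x_\varepsilon) - P_\varepsilon(\mathcal{T}(x_\varepsilon))\| \le \varepsilon$.

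Finally I would pass to the limit along $\varepsilon = 1/k \to 0$. The approximate fixed points satisfy $\|\mathcal{T}(x_k) - x_k\| \to 0$, and since $\mathcal{T}(x_k) \in K$ with $K$ compact, a subsequence yields $\mathcal{T}(x_{k_j}) \to z \in K$; the error estimate then forces $x_{k_j} \to z$ as well. Continuity of $\mathcal{T}$ gives $\mathcal{T}(z) = z$, and $z \in K \subseteq B$ is the desired fixed point. The step I expect to be the main obstacle is the construction and control of the Schauder projection: one must check both that $P_\varepsilon$ is well defined and continuous on all of $K$ and that its uniform $\varepsilon$-closeness to the identity persists after composition with $\mathcal{T}$. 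Everything else amounts to a fairly mechanical invocation of Brouwer's theorem together with a compactness extraction.
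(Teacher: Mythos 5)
Your proof is correct: the reduction via the Schauder projection $P_\varepsilon$ onto $\operatorname{conv}\{y_1,\dots,y_n\}$, Brouwer's theorem on each finite-dimensional compact convex set $C_\varepsilon$, and the compactness extraction from $K=\overline{\mathcal{T}(B)}$ are all sound, including the key estimate $\|P_\varepsilon(y)-y\|\le\varepsilon$ (which holds because $m_i(y)>0$ only when $\|y-y_i\|<\varepsilon$). The paper itself offers no proof of this statement --- it is quoted as a background result with a citation to Zeidler --- and your argument is precisely the classical finite-dimensional approximation proof given in that reference, so there is nothing to compare beyond noting the agreement.
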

	In the sequel, we recall some definitions on the constructions on the nonlinear Darcy equation and its monotonicity properties as they have been presented in \cite{Aulisa2009}.
	First of all, we introduce the function $K_1: \mathbb{R}^{+}\longrightarrow \mathbb{R}^{+}$ defined for $\xi \geq 0$ by $K_1(\xi)=\frac{1}{g(s(\xi))}$ being the unique non-negative solution of the equation $sg(s) = \xi$, where $g :\mathbb{R}^{+} \to \mathbb{R}^{+}$ is a polynomial with positive coefficients defined by 
	\begin{align}\label{g}
	g(s) = a_0s^{\alpha_0} + a_1s^{\alpha_1} + \ldots + a_Ns^{\alpha_N} \ \text{ for } s \geq 0,
	\end{align}
	where $\alpha_k \in \mathbb{R}_{+}$ with $k\in \{0,\ldots,N\}$.\\
	The function $g$ be independent of the spatial variables. Thus, we may have
	\begin{align}\label{nonlinearDarcy}
	G(|v|)=g(|v|)|v|=|\nabla p|,
	\end{align}
	where $G(s)=sg(s)$ for $s \geq 0$. From now on we use the following notation for the function $G$ and its inverse $G^{-1}$, namely, $G(s) = sg(s)=\xi$ and $s=G^{-1}(\xi)$. To be successful the analysis below, we impose the following condition, referred to $(G)$.

	\textit{$(G_1)$ } $g \in C([0,\infty)) \cap C^1((0,\infty))$ such that
	$$g(0) > 0 \text{ and } g'(s) \geq 0 \text{ for all } s \geq 0.$$
	
	\textit{$(G_2)$ } It exists $\theta >0 $ with $g \in C([0,\infty)) \cap C^1((0,\infty))$ such that
	\begin{align}
	g(s) \geq \theta sg'(s) \text{ for all } s>0.
	\end{align}

	\subsection{Assumptions}
	We make the following choices on the structure of the nonlinearities.
	\begin{itemize}
		\item[($\text{A}_1$)] The structure of $K_1(\xi)$ has the following properties hold $K_1: [0,\infty) \longrightarrow (0,\frac{1}{a_0}]$ such that $K_1$ is decreasing and
		\begin{align}\label{k1-eqn1}
		\frac{d_1}{(1+\xi)^a} \leq K_1(\xi) \leq \frac{d_2}{(1+\xi)^a};
		\end{align}
		\begin{align}\label{k1-eqn2}
		d_3(\xi^{2-a}-1)\leq K_1(\xi)\xi^2 \leq d_2\xi^{2-a} \text{ for all } \xi \in [0,\infty).
		\end{align}
		In \eqref{k1-eqn1}, $d_1, d_2, d_3$ are strictly positive constants depending on $g(s)$ and $a \in (0,1)$. 
		\item[($\text{A}_2$)] The function $b: \mathbb{R} \longrightarrow \mathbb{R}$ satisfies the following structural condition: it exits $\hat{c}>0$ such that $b(z) \leq \hat{c}|z|^{\sigma}$, with $\sigma \in (0,1)$. 
	\end{itemize}
	Note that the choice of $(\text{A}_1)$ was inspired by Theorem III.10 in \cite{Aulisa2009}, while $(\text{A}_2)$ is a coupling refereed to as Henry term in a mass transfer context (see e.g. \cite{Muntean2010} and \cite{Lind2018} for a related setting). 
	We can now define the following concept of solutions to \eqref{main_eq}.
	\begin{definition}\label{dn-weak}
		Find $$(u,v) \in \left(L^\alpha(S;L^\alpha(\Omega))\cap L^{2-a}(S;W^{1,2-a}(\Omega))\right) \times L^2(S;W^{1,2}(\Omega))$$
		satisfying the identities
		\begin{align}\label{weak1}
		\int_{\Omega}\partial_t(u^{\lambda})\psi dx + \int_{\Omega} K_1(|\nabla u|)\nabla u \nabla \psi dx + \int_{\Gamma^R}\varphi u^{\lambda}\psi d\gamma = -\int_{\Omega}b(u-v)\psi dx
		\end{align}
		and
		\begin{align}\label{weak2}
		\int_{\Omega}\partial_t v\phi dx + \int_{\Omega}K_2 \nabla v \nabla \phi dx = \int_{\Omega}b(u-v)\phi dx
		\end{align}
		for a.e $t \in S$ and for all $\psi \in L^\alpha(\Omega)$, $\phi \in W^{1,2}(\Omega)$.
	\end{definition}
	\subsection{Statement of the main result}
	The main result of this paper is stated in Theorem \ref{existence-theo}. Here we establish the existence of solutions. It turns out that for completing the well-posedness study of our system much more information is needed. We comment on this matter in Remark \ref{discussion}.  
	
	\begin{theorem}\label{existence-theo}
		Assume that $(A_1)$-$(A_2)$ hold. Let $\lambda \in (0,1]$, $\delta =1-\lambda$, $a = \frac{\alpha_N}{\alpha_N+1} \in (0,1)$, $a > \delta$, $\alpha \geq 2-\delta$, $\alpha \leq 2$, $\sigma\leq \frac{\alpha}{2}$, $\sigma \in (0,1)$ and $u_0 \in L^{\alpha}(\Omega)$, $v_0 \in L^2(\Omega)$. Then the problem \eqref{main_eq} has at least a weak solution\\ $(u,v) \in \left(L^\alpha(S;L^\alpha(\Omega))\cap L^{2-a}(S;W^{1,2-a}(\Omega))\right) \times L^2(S;W^{1,2}(\Omega))$ in the sense of Definition \ref{dn-weak}. 
	\end{theorem}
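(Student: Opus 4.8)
The plan is to construct a solution through a decoupling fixed-point scheme driven by the a priori bounds obtained in Section~\ref{Sec:Energy}. First I would freeze the coupling: for a given pair $(\bar u,\bar v)$ taken in a suitable closed, bounded, convex set $B$ of the product space of Definition~\ref{dn-weak}, I treat the Henry source $b(\bar u-\bar v)$ as a fixed datum. Assumption $(A_2)$ makes this datum controllable, since $|b(\bar u-\bar v)|\le\hat c\,|\bar u-\bar v|^{\sigma}$ with $\sigma<1$, so it lies in a favorable Lebesgue class once $\bar u,\bar v$ do. This splits \eqref{main_eq} into two subproblems that may be solved separately and defines a map $\mathcal T(\bar u,\bar v)=(u,v)$ whose fixed point is the sought weak solution.

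The $v$-subproblem $\partial_t v-K_2\Delta v=b(\bar u-\bar v)$ with homogeneous Neumann data is linear and uniformly parabolic; I would solve it by a standard Galerkin procedure, obtaining $v\in L^2(S;W^{1,2}(\Omega))$ together with the natural energy bound and $\partial_t v$ in the dual space. The $u$-subproblem is the genuine difficulty: it is \emph{doubly nonlinear}, carrying both the degenerate time term $\partial_t(u^{\lambda})$ and the generalized Forchheimer flux $-\mathrm{div}(K_1(|\nabla u|)\nabla u)$, coupled to the nonlinear Robin condition on $\Gamma^R$. I would first regularize the time nonlinearity, replacing $u^{\lambda}$ by a nondegenerate strictly increasing approximation $\beta_{\ep}(u)$ (e.g.\ $(u+\ep)^{\lambda}-\ep^{\lambda}$), solve the resulting nondegenerate problem by Galerkin in space, and then remove $\ep$. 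At each Galerkin level, solvability follows from monotonicity: condition $(G_2)$ guarantees that $\xi\mapsto K_1(|\xi|)\xi$ is monotone, as in \cite{Aulisa2009}, while $u\mapsto\beta_{\ep}(u)$ is monotone in the time term.

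The a priori estimates are the engine that both closes the subproblems and shows $\mathcal T$ is a self-map of $B$. Testing the $u$-equation with $u$ turns the time term into $\tfrac{\lambda}{\lambda+1}\tfrac{d}{dt}\!\int_{\Omega}u^{\lambda+1}$, the flux term into a coercive contribution bounded below by $d_3\!\int_{\Omega}(|\nabla u|^{2-a}-1)$ via \eqref{k1-eqn2}, and the Robin term into $\int_{\Gamma^R}\varphi\,u^{\lambda+1}$; here the trace inequality of Lemma~\ref{trace-lm} with $\alpha=\lambda+1$ is tailored so that $\alpha+\delta-2=0$, whence its leading term reduces to $2\ep\!\int_{\Omega}|\nabla u|^{2-a}$ and is absorbed into the coercive flux term for $\ep$ small. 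The hypotheses $a>\delta$, $2-\delta\le\alpha\le 2$, and $\sigma\le\alpha/2$ are precisely what is needed for this absorption and for controlling the source through Young's inequality and the elementary estimates \eqref{pre-ine-1}--\eqref{pre-ine-5}. This yields uniform bounds for $u$ in $L^{\infty}(S;L^{\lambda+1}(\Omega))\cap L^{2-a}(S;W^{1,2-a}(\Omega))$ and for $\partial_t(u^{\lambda})$ in a dual space, stable in the regularization parameter.

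Finally I would establish compactness and continuity of $\mathcal T$. The bounds above, combined with the Aubin--Lions Theorem~\ref{Aubin-theo} and Rellich--Kondrachov Theorem~\ref{rellich-theo}, give strong convergence of $u$ in $L^{2-a}(S\times\Omega)$ and of $v$ in $L^2(S\times\Omega)$, so that $\mathcal T$ is compact. Passing to the limit in the flux is the main obstacle: one needs strong convergence of $\nabla u$, which I would recover by the Minty--Browder monotonicity argument resting on $(G_2)$, while simultaneously passing to the limit in the degenerate term $\partial_t(u^{\lambda})$ requires strong convergence of $u$ itself together with a careful treatment of the degeneracy of $s\mapsto s^{\lambda}$ at the origin when $\lambda<1$. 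Once continuity of $\mathcal T$ on $B$ is verified, Schauder's Theorem~\ref{Schauder-theo} delivers a fixed point $(u,v)$; identifying the limits in \eqref{weak1}--\eqref{weak2} and recovering the initial data completes the proof in the sense of Definition~\ref{dn-weak}. I expect the \emph{coupled} passage to the limit in the two nonlinearities --- reconciling the Minty argument for the flux with the nonlinear compactness needed for $\partial_t(u^{\lambda})$ --- to be the delicate point.
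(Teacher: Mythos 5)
Your proposal follows the same overall architecture as the paper: decouple \eqref{main_eq} into the sub-problems $(P_1)$ (the Forchheimer equation with $v$ frozen) and $(P_2)$ (the heat equation with $u$ frozen), run the energy estimates of Section \ref{Sec:Energy}, obtain compactness from Rellich--Kondrachov (Theorem \ref{rellich-theo}) and Aubin--Lions (Theorem \ref{Aubin-theo}), and close with Schauder's theorem on a closed bounded convex set $B_K$. Within that skeleton, however, you go genuinely further on two points where the paper is silent or thin. First, the paper never actually constructs solutions of the sub-problems: it declares the maps $\mathcal{T}_1,\mathcal{T}_2$ ``well-defined'' on the strength of the a priori estimates of Propositions \ref{prop1} and \ref{prop2} alone, although estimates by themselves do not yield solvability of the doubly nonlinear problem $(P_1)$. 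Your Galerkin scheme, combined with the nondegenerate regularization $\beta_{\varepsilon}(u)$ of $u^{\lambda}$ and the monotonicity of $\xi\mapsto K_1(|\xi|)\xi$ guaranteed by $(G_2)$, supplies exactly this missing layer. Second, in its continuity proof for $\mathcal{T}_2$ the paper passes to the limit by asserting $b(u_n-v_n)\rightharpoonup b(u-v)$ and $K_1(|\nabla u_n|)|\nabla u_n|^2\rightharpoonup K_1(|\nabla u|)|\nabla u|^2$ from boundedness alone, which is not a valid inference for nonlinear superposition operators under weak convergence; your Minty--Browder identification of the flux limit, together with the strong convergence needed for $\partial_t(u^{\lambda})$, is the argument actually required there. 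So your route buys a complete existence proof for the sub-problems and a correct limit passage, at the price of the regularization and Galerkin machinery the paper avoids; your closing remark correctly isolates the reconciliation of the Minty argument with the degenerate time term as the delicate point.

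One concrete discrepancy to repair: you test the $u$-equation with $u$ itself, which corresponds to the paper's test function $|u|^{\alpha+\delta-1}$ only at the endpoint $\alpha=\lambda+1=2-\delta$. Your computation is internally consistent (the time term becomes $\tfrac{\lambda}{\lambda+1}\tfrac{d}{dt}\int_{\Omega}u^{\lambda+1}dx$, and Lemma \ref{trace-lm} with exponent $\lambda+1$ indeed makes $\alpha+\delta-2=0$ so the boundary term is absorbed via \eqref{k1-eqn2}), but it only yields $u\in L^{\infty}(S;L^{\lambda+1}(\Omega))$, which for $\alpha>2-\delta$ is weaker than the $L^{\alpha}(S;L^{\alpha}(\Omega))$ membership demanded by Definition \ref{dn-weak} and by the self-map property of $B_K$. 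To cover the full range $2-\delta\le\alpha\le 2$ you should test with $|u|^{\alpha+\delta-1}$ as in Proposition \ref{prop1}; the same absorption argument goes through with the weight $|u|^{\alpha+\delta-2}$ in the coercive term, so nothing else in your scheme changes.
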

	
\begin{remark}\label{discussion}
	To obtain information concerning the uniqueness of solutions or about and eventual the stability with respect to data and parameters, we conjecture that $\nabla u \in L^\infty(S; L^\infty(\Omega))$ and $\nabla v \in L^\infty(S; L^\infty(\Omega))$.  This way of arguing is in line with Proposition IV.4 in \cite{Aulisa2009}. However, this sort of Bernstein-like estimates  on the solutions in the sense of Definition \ref{dn-weak} are not yet available.	We are currently attempting to prove them using techniques from  \cite{Ladyzenskaja1968}.
\end{remark}
	
	\section{Energy estimates}\label{Sec:Energy}
	In this section, we provide the energy estimates for the solutions in the sense of Definition \ref{dn-weak} to our problem \eqref{main_eq}. This is a crucial step, which in fact allows the Schauder-fixed point Theorem to work in our case.
	
	\subsection{$L^{\alpha}$ - $L^2$ estimates}
	\begin{proposition}\label{prop1}
		Assume that $(A_1)$-$(A_2)$ hold and let $\lambda \in (0,1]$, $\delta =1-\lambda$, $a = \frac{\alpha_N}{\alpha_N+1} \in (0,1)$, $a > \delta$, $\alpha \geq 2-\delta$, $\alpha \leq 2$, $\sigma\leq \frac{\alpha}{2}$, $\sigma \in (0,1)$ and $u_0 \in L^{\alpha}(\Omega)$, $v_0 \in L^2(\Omega)$. Then, for any $t\in S$, the following estimates hold
		\begin{align}\label{main_0}
		\frac{d}{dt}\int_{\Omega}|u|^{\alpha}dx + \int_{\Omega}|\nabla u|^{2-a}|u|^{\alpha + \delta -2}dx \leq C_1+\Big(\frac{3}{2C_2}\hat{c}+\nonumber\\ + \frac{d_3(\alpha - \lambda)}{C_2}\Big)\|u\|_{L^{\alpha}(\Omega)}^{\alpha} + \frac{\hat{c}}{2C_2}\|v\|_{L^2(\Omega)}^2.
		\end{align}
		\begin{align}\label{ine_main1}
		\int_{\Omega}|u|^{\alpha}dx + \int_{\Omega}v^2dx \leq e^{C_3t}\left(1+\|u_0\|_{L^\alpha(\Omega)}^{\alpha} + \|v_0\|_{L^2(\Omega)}^2\right),
		\end{align}
		\begin{align}\label{ine_main2}
		\int_0^T\int_{\Omega}|u|^{\alpha+\delta-2}|\nabla u|^{2-a}dxdt + \int_{0}^{T}\int_{\Omega}|\nabla v|^2dxdt \leq C_5 +\nonumber\\+ C_6\left(\|u_0\|_{L^\alpha(S; L^\alpha(\Omega))}^{\alpha} + \|v_0\|_{L^2(S; L^2(\Omega))}^2 \right),
		\end{align}
		where $C_1:= \frac{d_3(\alpha-\lambda)+\hat{c}}{C_5}|\Omega|$, $C_2:=\min\left\{\frac{\lambda}{\alpha},d_3(\alpha-\lambda)\right\}$, $C_3:=\max\left\{\frac{5}{2}\underline{\tilde{c}}\hat{c}|\Omega|,2\underline{\tilde{c}}\hat{c}\right\}$, $C_4:= \min\left\{\alpha-\lambda,K_2\right\}$,  $C_5:=\frac{5T}{2C_4}\hat{c}|\Omega| + \frac{2T\hat{c}e^{C_3t}}{C_4}$,  and   $C_6:=\frac{2\hat{c}e^{C_3t}}{C_2}$ with $\underline{\tilde{c}}:=\frac{1}{\min\left\{\frac{\lambda}{\alpha},\frac{1}{2}\right\}}$ and $\hat{c}$ as in $(\text{A}_2)$, respectively.
	\end{proposition}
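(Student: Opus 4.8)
The plan is to derive the three estimates by testing the weak formulation \eqref{weak1}--\eqref{weak2} with suitable powers of the unknowns and then combining a Gronwall argument with a time integration. First I would test \eqref{weak1} with $\psi=u^{\alpha-\lambda}$ (admissible since $\alpha-\lambda\ge 1>0$ by $\alpha\ge 2-\delta$, and $u\ge 0$ for a density; rigorously this is carried out on a regularized or Galerkin level and passed to the limit). The time term becomes $\int_\Omega\partial_t(u^\lambda)u^{\alpha-\lambda}\,dx=\frac{\lambda}{\alpha}\frac{d}{dt}\int_\Omega|u|^\alpha\,dx$, which explains the factor $\frac{\lambda}{\alpha}$ inside $C_2$. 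By the chain rule the diffusion term equals $(\alpha-\lambda)\int_\Omega K_1(|\nabla u|)|\nabla u|^2\,|u|^{\alpha-\lambda-1}\,dx$; here $\alpha-\lambda-1=\alpha+\delta-2$, so invoking the lower bound in \eqref{k1-eqn2} produces the dissipation $d_3(\alpha-\lambda)\int_\Omega|\nabla u|^{2-a}|u|^{\alpha+\delta-2}\,dx$ together with a remainder $-\,d_3(\alpha-\lambda)\int_\Omega|u|^{\alpha+\delta-2}\,dx$, which \eqref{pre-ine-5} (valid because $0\le\alpha+\delta-2\le\alpha$) bounds by $d_3(\alpha-\lambda)\bigl(|\Omega|+\|u\|_{L^\alpha(\Omega)}^\alpha\bigr)$. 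This is the origin of the $\frac{d_3(\alpha-\lambda)}{C_2}\|u\|_{L^\alpha(\Omega)}^\alpha$ term and of part of $C_1$.

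Next I would treat the boundary and coupling contributions. The boundary integral $\int_{\Gamma^R}\varphi u^\alpha\,d\gamma$ is nonnegative when $\varphi\ge 0$ and may simply be discarded on the left; should $\varphi$ change sign, the trace inequality of Lemma \ref{trace-lm} applies, the term $2\varepsilon\int_\Omega|u|^{\alpha+\delta-2}|\nabla u|^{2-a}\,dx$ being absorbed into the dissipation for $\varepsilon$ small. For the coupling I would use $(\text{A}_2)$ together with \eqref{pre-ine-2} to write $|b(u-v)|\le\hat c(|u|^\sigma+|v|^\sigma)$ and then estimate $\hat c\int_\Omega(|u|^\sigma+|v|^\sigma)u^{\alpha-\lambda}\,dx$ by Young's inequality, splitting the cross term so as to generate $\frac{\hat c}{2}\|v\|_{L^2(\Omega)}^2$ while \eqref{pre-ine-5} collapses the remaining powers of $u$ to $\|u\|_{L^\alpha(\Omega)}^\alpha$ plus a constant. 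Dividing by $C_2=\min\{\frac{\lambda}{\alpha},d_3(\alpha-\lambda)\}$, so that both left-hand terms carry coefficient at least one, yields \eqref{main_0}.

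For the $v$-component I would test \eqref{weak2} with $\phi=v$, obtaining $\frac12\frac{d}{dt}\int_\Omega v^2\,dx+K_2\int_\Omega|\nabla v|^2\,dx=\int_\Omega b(u-v)v\,dx$, and bound the right-hand side as above; here the hypothesis $\sigma\le\frac{\alpha}{2}$ is decisive, since after Young it forces the exponent $2\sigma\le\alpha$ so that \eqref{pre-ine-5} returns $\|u\|_{L^\alpha(\Omega)}^\alpha$, and $\sigma+1\le 2$ returns $\|v\|_{L^2(\Omega)}^2$. Adding this to the $u$-estimate and discarding the nonnegative dissipation and $|\nabla v|^2$ terms gives an inequality of the form $\frac{d}{dt}\bigl(\|u\|_{L^\alpha(\Omega)}^\alpha+\|v\|_{L^2(\Omega)}^2\bigr)\le C+C_3\bigl(\|u\|_{L^\alpha(\Omega)}^\alpha+\|v\|_{L^2(\Omega)}^2\bigr)$, to which Gronwall's lemma yields \eqref{ine_main1}. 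Finally, \eqref{ine_main2} follows by integrating the same combined inequality over $(0,T)$ while \emph{keeping} the dissipation terms on the left, and by invoking \eqref{ine_main1} to control the endpoint $\|u(T)\|_{L^\alpha(\Omega)}^\alpha+\|v(T)\|_{L^2(\Omega)}^2$ and the time integral of the right-hand side.

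I expect the main obstacle to be twofold. The first is rigorously justifying the test function $u^{\alpha-\lambda}$, i.e. the positivity and integrability of $u$, which forces the whole computation onto a regularized or Galerkin approximation before passing to the limit. The second, and more delicate, is the bookkeeping in the coupling and boundary estimates: one must bring every superlinear power produced by $(\text{A}_2)$ and the trace inequality back, via \eqref{pre-ine-2}--\eqref{pre-ine-5} and the standing constraints $\sigma\le\frac{\alpha}{2}$, $2-\delta\le\alpha\le 2$ and $a>\delta$, to exactly $\|u\|_{L^\alpha(\Omega)}^\alpha$ and $\|v\|_{L^2(\Omega)}^2$, and absorb the gradient contribution of the trace term into the dissipation without over-spending the coefficient $C_2$.
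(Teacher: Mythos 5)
Your proposal follows essentially the same route as the paper's proof: you test with $\psi=u^{\alpha-\lambda}=|u|^{\alpha+\delta-1}$ and $\phi=v$, extract the dissipation via the lower bound \eqref{k1-eqn2}, handle the coupling with $(\text{A}_2)$, Young's inequality and \eqref{pre-ine-2}--\eqref{pre-ine-5} under $\sigma\le\frac{\alpha}{2}$, obtain \eqref{ine_main1} by Gronwall after discarding the gradient terms, and recover \eqref{ine_main2} by reinstating them and integrating in time -- exactly the paper's argument. The only cosmetic deviations are that the paper simply drops the boundary term $\int_{\Gamma^R}\varphi|u|^{\alpha}\,d\gamma$ (implicitly taking $\varphi\ge 0$, reserving the trace inequality \eqref{trace-theo} for the gradient estimates of the next proposition) and argues formally, without the Galerkin/regularization layer you rightly flag as needed to justify the test function.
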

	\begin{proof}
		To prove \eqref{ine_main1}, we proceed as follows. We consider firstly the following sub-problem to which we refer to as $(P_1)$: For a given $v \in L^2(S; L^2(\Omega))$, search for $u \in L^\alpha(S;L^\alpha(\Omega))\cap L^{2-a}(S;W^{1,2-a}(\Omega))$ such that \eqref{sub-prob1} is fulfilled, viz. 
		\begin{align}\label{sub-prob1}
		\begin{cases}
		\partial_t (u^\lambda) + \mathrm{div}(-K_1(|\nabla u|)\nabla u) = -b(u-v) \ \text{ in } S \times\Omega,\\
		-K_1(|\nabla u|)\nabla u \cdot {\bf n} = \varphi u^\lambda \ \text{ at } S\times\Gamma^R,\\
		-K_1(|\nabla u|)\nabla u\cdot {\bf n} = 0 \ \text{ at } S\times\Gamma^N,\\
		u(0,x) = u_0(x) \ \text{ for all } x \in \bar{\Omega}.
		\end{cases}
		\end{align}
		Multiplying both sides of the first equation in $(P_1)$ by $|u|^{\alpha+\delta-1}$ and integrating the result over $\Omega$, we obtain 
		\begin{align}
		&\int_{\Omega} \partial_t(u^\lambda)|u|^{\alpha + \delta -1}dx + \int_{\Omega}\mathrm{div} (-K_1(|\nabla u|)\nabla u)|u|^{\alpha + \delta - 1}dx = -\int_{\Omega}b(u-v)|u|^{\alpha + \delta -1}dx. \nonumber
		\end{align}
		Integrating by parts the last identity, and using the boundary conditions, it yields:
		\begin{align}
		\frac{\lambda}{\alpha}\frac{d}{dt}\int_{\Omega}|u|^\alpha dx + (\alpha - \lambda)\int_{\Omega} K_1(|\nabla u|)|\nabla u|^2u^{\alpha + \delta - 2}dx \nonumber\\+ \int_{\Gamma^R} |u|^{\alpha}\varphi d\gamma=-\int_{\Omega}b(u-v)|u|^{\alpha + \delta -1}dx. \nonumber
		\end{align}
		Using ($\text{A}_2$), we get the following estimate:
		\begin{align}\label{ineq_1}
		\frac{\lambda}{\alpha}\frac{d}{dt}\int_{\Omega}|u|^{\alpha}dx &+(\alpha - \lambda)\int_{\Omega} K_1(|\nabla u|)|\nabla u|^2|u|^{\alpha + \delta - 2}dx \leq \int_{\Omega}\hat{c}|u-v|^{\sigma}|u|^{\alpha + \delta - 1}dx \nonumber\\ &\leq \int_{\Omega}\hat{c}|u+v|^{\sigma}|u|^{\alpha + \delta -1}dx \nonumber\\
		&\leq \hat{c}\int_{\Omega}|u|^{\sigma}|u|^{\alpha + \delta -1}dx + \hat{c}\int_{\Omega}|v|^{\sigma}|u|^{\alpha + \delta -1}dx,
		\end{align}
		where we used the inequality $|u+v|^{\sigma} \leq |u|^{\sigma} + |v|^{\sigma}$ for $\sigma \in (0,1)$.
		
		By choosing $\sigma = 1- \delta$ and $\sigma \leq \frac{\alpha}{2}$ such that $\sigma \in (0,1)$ and $1<\alpha \leq 2$, the inequality \eqref{ineq_1} becomes
		\begin{align}\label{ineq_2}
		\frac{\lambda}{\alpha}\frac{d}{dt}\int_{\Omega}|u|^{\alpha}dx &+(\alpha - \lambda)\int_{\Omega} K_1(|\nabla u|)|\nabla u|^2|u|^{\alpha + \delta - 2}dx\nonumber\\
		&\leq \frac{\hat{c}}{2}\int_{\Omega}(1+|u|^{\alpha})dx + \frac{\hat{c}}{2}\int_{\Omega}(1+|v|^2)dx + \hat{c}\int_{\Omega}|u|^{\alpha}dx \nonumber\\
		&\leq \frac{\hat{c}}{2}|\Omega| + \frac{\hat{c}}{2}\int_{\Omega}|u|^{\alpha}dx + \hat{c}\int_{\Omega}|u|^\alpha dx + \frac{\hat{c}}{2}|\Omega| + \frac{\hat{c}}{2}\int_{\Omega}|v|^2dx \nonumber\\
		&\leq \hat{c}|\Omega| + \frac{3\hat{c}}{2}\|u\|^{\alpha}_{L^{\alpha}(\Omega)} + \frac{\hat{c}}{2}\|v\|_{L^2(\Omega)}^2.
		\end{align}
		\eqref{ineq_2} also leads to
		\begin{align}
		\frac{d}{dt}\int_{\Omega}|u|^{\alpha}dx &+ \int_{\Omega}|\nabla u|^{2-a}|u|^{\alpha+\delta-2}dx \leq \frac{d_3(\alpha-\lambda)+\hat{c}}{C_2}|\Omega| \nonumber\\&+ \left(\frac{3}{2C_2}\hat{c} + \frac{d_3(\alpha - \lambda)}{C_2}\right)\|u\|_{L^{\alpha}(\Omega)}^{\alpha} + \frac{\hat{c}}{2C_2}\|v\|_{L^2(\Omega)}^2,
		\end{align}
		where we use the property of $K_1$ as indicated in \eqref{k1-eqn1}.
		
		Now, we consider a second sub-problem which prefer to as $(P_2)$: For given $u \in L^\alpha(S; L^{\alpha}(\Omega))$, search for $v \in L^2(S; W^{1,2}(\Omega))$ such that
		\begin{align}
		\begin{cases}
		\partial_t v - K_2\Delta v =b(u-v) \ \text{ in } S\times\Omega, \\
		-K_2\nabla v \cdot {\bf n} = 0 \ \text{ at } S\times\partial\Omega, \\
		v(0,x) = v_0(x) \ \text{ for all } x \in \bar{\Omega}.
		\end{cases}
		\end{align}
		Multiplying the first equation of $(P_2)$ by $v$ and integrating the result over $\Omega$ lead to
		\begin{align}
		\int_{\Omega} \partial_tv v dx - \int_{\Omega}K_2 \Delta v vdx = \int_{\Omega}b(u-v)vdx.\nonumber
		\end{align}
		Integrating by parts this expression and using the corresponding boundary conditions ensure the identity:
		\begin{align}
		\frac{1}{2}\frac{d}{dt}\int_{\Omega}|v|^2dx + K_2\int_{\Omega}|\nabla v|^2dx = \int_{\Omega}b(u-v)vdx.
		\end{align}
		Then, by $(\text{A}_2)$, we have the following estimates
		\begin{align}\label{ineq_3}
		\frac{1}{2}\frac{d}{dt}\int_{\Omega}v^2dx + K_2\int_{\Omega}|\nabla v|^2dx &\leq \int_{\Omega}\hat{c}|u-v|^{\sigma}v dx \nonumber\\
		&\leq \hat{c}\int_{\Omega}|u|^{\sigma} v dx + \hat{c}\int_{\Omega}|v|^{\sigma}vdx \nonumber\\
		&\leq \frac{\hat{c}}{2}\int_{\Omega}|u|^{2\sigma}dx + \frac{\hat{c}}{2}\int_{\Omega}v^2dx + \hat{c}\int_{\Omega}(1+v^2)dx \nonumber\\
		&\leq \frac{\hat{c}}{2}\int_{\Omega}(1+|u|^{\alpha})dx + \frac{3\hat{c}}{2}\|v\|_{L^2(\Omega)}^2 + \hat{c}|\Omega| \nonumber\\
		&\leq \frac{3}{2}\hat{c}|\Omega| + \frac{\hat{c}}{2}\|u\|_{L^{\alpha}(\Omega)}^\alpha + \frac{3}{2}\hat{c}\|v\|_{L^2(\Omega)}^2.
		\end{align}
		Combining \eqref{ineq_2} and \eqref{ineq_3} together with neglecting the gradient terms from both these inequalities, we have
		\begin{align}\label{ine1}
		\frac{\lambda}{\alpha}\frac{d}{dt}\int_{\Omega}|u|^{\alpha}dx + \frac{1}{2}\frac{d}{dt}\int_{\Omega}|v|^2dx \leq \frac{5}{2}\hat{c}|\Omega| + 2\hat{c}\|u\|_{L^{\alpha}(\Omega)}^{\alpha} + 2 \hat{c}\|v\|_{L^2(\Omega)}^2.
		\end{align}
		Setting $\tilde{c}:= \min\left\{\frac{\lambda}{\alpha},\frac{1}{2}\right\}$, we rewrite \eqref{ine1} as
		\begin{align}\label{ineq_4}
		\frac{d}{dt}\int_{\Omega}|u|^{\alpha}dx + \frac{d}{dt}\int_{\Omega}|v|^2dx \leq \frac{5}{2}\underline{\tilde{c}}\hat{c}|\Omega| + 2\underline{\tilde{c}}\hat{c}\|u\|_{L^{\alpha}(\Omega)}^\alpha + 2\underline{\tilde{c}}\hat{c}\|v\|_{L^2(\Omega)}^2,
		\end{align}
		where $\underline{\tilde{c}}:=\frac{1}{\tilde{c}}$.
		
		For any $t\in S$, take $V(t):= 1 + \int_{\Omega}(|u|^{\alpha}+v^2)dx$ and $C_3:=\max\left\{\frac{5}{2}\underline{\tilde{c}}\hat{c}|\Omega|,2\underline{\tilde{c}}\hat{c}\right\}$. Then the inequality \eqref{ineq_4} becomes
		\begin{align}\label{bf-gronwall}
		\frac{d}{dt}V(t) \leq C_3V(t) \ \text{ for all } t \in S.
		\end{align}
		\eqref{bf-gronwall} leads to
		\begin{align}
		V(t) \leq V(0)\exp \left(\int_{0}^{t} C_3ds\right), \text{ and hence, we get } \nonumber
		\end{align}
		\begin{align}
		\int_{\Omega}|u|^{\alpha}dx + \int_{\Omega}v^2dx \leq e^{C_3t}\left(1+\|u_0\|_{L^\alpha(\Omega)}^{\alpha} + \|v_0\|_{L^2(\Omega)}^2\right).
		\end{align}
		Now we prove \eqref{ine_main2}. 
		Combining \eqref{ineq_2} and \eqref{ineq_3} yields
		\begin{align}\label{ineq_gron1}
		\frac{d}{dt}\int_{\Omega}|u|^{\alpha}dx + \frac{d}{dt}\int_{\Omega}|v|^2dx + (\alpha - \lambda)\int_{\Omega} K_1(|\nabla u|)|\nabla u|^2|u|^{\alpha + \delta - 2}dx \nonumber\\ +  K_2\int_{\Omega}|\nabla v|^2dx \leq \frac{5}{2}\hat{c}|\Omega|+2\hat{c}\|u\|_{L^\alpha(\Omega)}^\alpha+2\hat{c}\|v\|_{L^2(\Omega)}^2.
		\end{align}
		Set $C_4:= \min\left\{\alpha-\lambda,K_2\right\}$. Using \eqref{ine_main1} and integrating \eqref{ineq_gron1} over the time interval $S$, we are led to
		\begin{align}
		\int_0^T\int_{\Omega} K_1(|\nabla u|)|\nabla u|^2|u|^{\alpha + \delta - 2}dxdt+ \int_0^T\int_{\Omega}|\nabla v|^2dxdt \leq \frac{5T}{2C_4}\hat{c}|\Omega| + \frac{2T\hat{c}e^{C_3t}}{C_4} \nonumber\\ + \frac{2\hat{c}e^{C_3t}}{C_4} (\|u_0\|_{L^\alpha(S,L^\alpha(\Omega))}^{\alpha} + \|v_0\|_{L^2(S,L^2(\Omega))}^2).\nonumber
		\end{align} 
		Therefore, the inequality \eqref{ine_main2} holds. 
	\end{proof}
	\subsection{Gradient and time derivative estimates}
	We consider the following function $H: \Omega \longrightarrow \mathbb{R}$ given by
	\begin{align}
	H(\xi) = \int_{0}^{\xi^2}K_1(\sqrt{s})ds \ \text{ for } \xi \geq 0.
	\end{align}
	We have the following structural inequality between  $H(\xi)$ and $K_1(\xi)\xi^2$, i.e.
	\begin{align}\label{k1-ineq2}
	K_1(\xi)\xi^2 \leq H(\xi) \leq 2K_1(\xi)\xi^2 \text{ for all } \xi \geq 0.
	\end{align}
	By combining \eqref{k1-eqn1} and \eqref{k1-ineq2}, we deduce also that
	\begin{align}\label{H-ine1}
	d_3(\xi^{2-a}-1) \leq H(\xi) \leq 2d_2\xi^{2-a},
	\end{align}
	where $d_2, d_3$ and $a$ are defined as in $(\text{A}_1)$.
	\begin{proposition}
		Assume that ($A_1$)-($A_2$) hold. Let $\lambda \in (0,1]$, $\delta =1-\lambda$, $a=\frac{\alpha_N}{\alpha_N+1}$, $a > \delta$, $\alpha 
		\geq 2-\delta$, $\alpha\leq 2$ and $\sigma \leq \frac{\alpha}{2}$. Furthermore, suppose that $\nabla u_0 \in L^{\alpha}(\Omega) \cap L^{2-a}(\Omega)$, $u_0 \in L^{\alpha}(\Omega)$, $v_0 \in H^1(\Omega)$ and $\varphi \in L^{\infty}(\Gamma^R)$. Then, for any $t\in S$, the following estimates hold
		\begin{align}\label{ine-main3}
		\int_{\Omega}|\nabla u|^{2-a}dx + \int_{\Omega}|\nabla v|^2dx \leq C( \hat{c}, \lambda,a)\Bigg[\Lambda(0) + \int_{0}^{t}(1+\|u\|_{L^{\alpha}(\Omega)}^{\alpha})^{\beta}ds \nonumber\\+ \int_{0}^{t}\|v\|_{L^2(\Omega)}^2ds + \int_{0}^{t}\int_{\Gamma^R}|\varphi_t|^{\frac{\alpha}{\alpha-\lambda-1}}d\sigma ds\Bigg] + \int_{\Omega}|\nabla v_0|^2dx\nonumber\\
		+ \frac{\hat{c}^2}{C_2}|\Omega|t + \frac{\hat{c}^2}{2C_2} e^{C_3t}\left(1+\|u_0\|_{L^\alpha(S,L^{\alpha}(\Omega))}^{\alpha} + \|v_0\|_{L^2(S,L^2(\Omega))}^2\right) .
		\end{align}
		\begin{align}\label{ine-main4}
		\int_{\Omega}|(u^{\lambda})_t|^2dx + \int_{\Omega}|v_t|^2dx \leq C(\hat{c}, \lambda,a)\Bigg[1 + (1+\|u\|_{L^{\alpha}(\Omega)}^{\alpha})^{\beta} + \|v\|_{L^2(\Omega)}^2 \nonumber\\ + \int_{\Gamma^R}|\varphi_t|^{\frac{\alpha}{\alpha-\lambda-1}}d\sigma\Bigg] + \frac{\hat{c}^2}{C_2}|\Omega| + \frac{\hat{c}^2}{2C_2}e^{C_3t}\left(1+\|u_0\|_{L^{\alpha}(\Omega)}^{\alpha} + \|v_0\|_{L^2(\Omega)}^2\right),
		\end{align}
		where $C( \hat{c}, \lambda,a)>0$ is a constant and $$\Lambda(0):= \frac{\lambda+1}{2}\int_{\Omega}H(|\nabla u_0|)dx  + \int_{\Omega}|u_0|^{\alpha}dx.$$
	\end{proposition}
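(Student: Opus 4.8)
The plan is to derive two differential inequalities by testing the two sub-problems $(P_1)$ and $(P_2)$ from Proposition \ref{prop1} with the time derivatives of their unknowns, and then to close the estimates with the help of that same proposition. Concretely, I would multiply the first equation in $(P_1)$ by $\partial_t u$ and the second equation in $(P_2)$ by $\partial_t v$, integrate over $\Omega$, and track the natural energy functional $\Lambda(t):=\frac{\lambda+1}{2}\int_\Omega H(|\nabla u|)\,dx+\int_\Omega|u|^\alpha\,dx$ together with $\tfrac{K_2}{2}\int_\Omega|\nabla v|^2\,dx$. The reason to test with $\partial_t u$ rather than with $\partial_t(u^\lambda)$ is dictated by the boundary exponent appearing in \eqref{ine-main3}: only this choice produces the power $u^{\lambda+1}$ on $\Gamma^R$ whose Young-conjugate matches $\tfrac{\alpha}{\alpha-\lambda-1}$.

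For the first equation, integration by parts together with the Neumann/Robin conditions gives three contributions. The accumulation term yields $\int_\Omega\partial_t(u^\lambda)\,\partial_t u\,dx=\lambda\int_\Omega u^{\lambda-1}|\partial_t u|^2\,dx\ge 0$, which is the quantity controlling $\int_\Omega|(u^\lambda)_t|^2\,dx$ in \eqref{ine-main4} up to the weight $u^{\lambda-1}$. The flux term becomes $\tfrac12\frac{d}{dt}\int_\Omega H(|\nabla u|)\,dx$, since $\partial_t H(|\nabla u|)=2K_1(|\nabla u|)\nabla u\cdot\nabla\partial_t u$ by the definition of $H$. The boundary term on $\Gamma^R$ equals $\int_{\Gamma^R}\varphi u^\lambda\partial_t u\,d\sigma=\frac{1}{\lambda+1}\frac{d}{dt}\int_{\Gamma^R}\varphi u^{\lambda+1}\,d\sigma-\frac{1}{\lambda+1}\int_{\Gamma^R}\varphi_t u^{\lambda+1}\,d\sigma$; the last integral I would estimate by Young's inequality with exponents $\tfrac{\alpha}{\alpha-\lambda-1}$ and $\tfrac{\alpha}{\lambda+1}$, which produces precisely the term $\int_{\Gamma^R}|\varphi_t|^{\alpha/(\alpha-\lambda-1)}\,d\sigma$ and a trace term of the form $\int_{\Gamma^R}|u|^\alpha\,d\sigma$.

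The trace term is where Lemma \ref{trace-lm} enters: applying \eqref{trace-theo} bounds $\int_{\Gamma^R}|u|^\alpha\,d\sigma$ by $2\varepsilon\int_\Omega|u|^{\alpha+\delta-2}|\nabla u|^{2-a}\,dx$ plus powers $\|u\|_{L^\alpha(\Omega)}^{\alpha+\mu_0}$ and $\|u\|_{L^\alpha(\Omega)}^{\alpha+\mu_1}$, all of which are dominated by $(1+\|u\|_{L^\alpha(\Omega)}^\alpha)^\beta$ via \eqref{pre-ine-4} and \eqref{pre-ine-5} for a suitable $\beta$. Choosing $\varepsilon$ small, the borderline gradient integral $\varepsilon\int_\Omega|u|^{\alpha+\delta-2}|\nabla u|^{2-a}\,dx$ is absorbed into the dissipation made available by adding the Proposition \ref{prop1} inequality \eqref{main_0}; this also explains the $\int_\Omega|u|^\alpha$ summand in $\Lambda$. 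The source terms are handled by $(A_2)$: writing $b(u-v)\,\partial_t u\le\hat c|u+v|^\sigma|\partial_t u|$ and applying Young's inequality, together with $\sigma\le\tfrac{\alpha}{2}$ and the elementary bounds of the first lemma, lets me absorb $\partial_t u$ into $\lambda\int_\Omega u^{\lambda-1}|\partial_t u|^2$ and leaves lower-order terms $\|u\|_{L^\alpha(\Omega)}^\alpha$ and $\|v\|_{L^2(\Omega)}^2$ with the constants $\tfrac{\hat c^2}{C_2}$. The $v$-equation is treated identically: $\int_\Omega|\partial_t v|^2\,dx+\tfrac{K_2}{2}\frac{d}{dt}\int_\Omega|\nabla v|^2\,dx=\int_\Omega b(u-v)\partial_t v\,dx$, and Young's inequality closes it.

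Collecting everything, I arrive at a differential inequality of the form $\frac{d}{dt}\big(\Lambda(t)+\tfrac{K_2}{2}\int_\Omega|\nabla v|^2\big)+c\int_\Omega|(u^\lambda)_t|^2+\tfrac12\int_\Omega|\partial_t v|^2\le \text{(RHS)}$, whose right-hand side contains $(1+\|u\|_{L^\alpha(\Omega)}^\alpha)^\beta$, $\|v\|_{L^2(\Omega)}^2$, the $\varphi_t$-trace term and constants. Integrating over $(0,t)$ and passing from $H$ to $|\nabla u|^{2-a}$ through \eqref{H-ine1}, the lost constant being harmless, yields the left-hand side of \eqref{ine-main3}, while inserting the already-proven bounds \eqref{ine_main1} and \eqref{ine_main2} controls $\int_0^t\|u\|_{L^\alpha}^\alpha\,ds$ and $\int_0^t\|v\|_{L^2}^2\,ds$ and generates the $e^{C_3t}$ terms. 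Finally, reading the same differential inequality pointwise in $t$ and substituting \eqref{ine-main3} for the gradient integrals gives \eqref{ine-main4}. I expect the genuine obstacle to be the interplay of the power nonlinearity $\partial_t(u^\lambda)$ with the degenerate flux $K_1(|\nabla u|)\nabla u$ when both are paired with $\partial_t u$: reconciling the weighted dissipation $\lambda\int_\Omega u^{\lambda-1}|\partial_t u|^2$ with the un-weighted target $\int_\Omega|(u^\lambda)_t|^2$, which is delicate when $\lambda<1$ since $u^{\lambda-1}$ degenerates where $u$ is small, and simultaneously matching the weight $|u|^{\alpha+\delta-2}$ of the trace-inequality gradient term to the dissipation supplied by Proposition \ref{prop1} so that the $\varepsilon$-term is absorbed on the correct side, is where the bookkeeping is hardest.
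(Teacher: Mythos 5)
Your proposal follows the paper's proof essentially step for step: testing $(P_1)$ with $u_t=\frac{1}{\lambda}(u^{\lambda})_t u^{1-\lambda}$ and $(P_2)$ with $v_t$, tracking $\Lambda(t)$ built from $H$, time-differentiating the Robin boundary term and applying Young's inequality with the conjugate pair $\frac{\alpha}{\alpha-\lambda-1}$, $\frac{\alpha}{\lambda+1}$, bounding the resulting trace term by Lemma \ref{trace-lm} and absorbing its $\varepsilon$-gradient contribution through the dissipation supplied by \eqref{main_0} (which is exactly why $\Lambda$ carries the $\int_{\Omega}|u|^{\alpha}dx$ summand), then integrating in time, converting $H$ to $|\nabla u|^{2-a}$ via \eqref{H-ine1}, and closing with \eqref{ine_main1}. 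The one obstacle you honestly flag as unresolved --- reconciling the weighted dissipation $\frac{\lambda+1}{\lambda}\int_{\Omega}|u|^{1-\lambda}|(u^{\lambda})_t|^2dx$ with the unweighted target $\int_{\Omega}|(u^{\lambda})_t|^2dx$ when $\lambda<1$ --- is precisely the point where the paper itself invokes an assumption labelled ($A_3$) that is never stated anywhere in the text, so your account is no less complete than the published argument.
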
\label{prop2}
	\begin{proof}
		We begin with studying the sub-problem ($P_1$) for a given choice of $v\in L^2(S; L^2(\Omega))$. Multiplying the first equation in $(P_1)$ by $u_t = \frac{1}{\lambda}(u^{\lambda})_tu^{1-\lambda}$ (note that $\frac{1}{\lambda}(u^{\lambda})_tu^{1-\lambda} = \frac{1}{\lambda}\lambda u^{\lambda-1}u_t u^{1-\lambda} = u_t$) and integrating the result over $\Omega$, we have
		\begin{align}\label{prop2-ine1}
		\int_{\Omega}\partial _t(u^{\lambda})u_tdx + \int_{\Omega}\mathrm{div}(-K_1(|\nabla u|)\nabla u)u_tdx = -\int_{\Omega}b(u-v)u_tdx. 
		\end{align}
		By integrating \eqref{prop2-ine1} by parts and using the property of the function $H$ as stated in \eqref{k1-ineq2}, yields:
		\begin{align}
		\frac{1}{\lambda}\int_{\Omega}u_t(u^{\lambda})_tdx + \frac{1}{2}\frac{d}{dt}\int_{\Omega}H(|\nabla u|)dx + \int_{\Gamma^R}\varphi u^{\lambda}u_td\sigma = -\int_{\Omega}b(u-v)u_tdx.
		\end{align}
		Using the assumption ($A_2$) together with the integration by parts the term $\int_{\Gamma^R}\varphi u^{\lambda}u_td\sigma$ and with applying afterwards the Cauchy-Schwarz's inequality, we get the upper bound
		\begin{align}\label{ineH_1}
		\frac{1}{\lambda}\int_{\Omega}|&u_t(u^{\lambda})_t|dx + \frac{1}{2}\frac{d}{dt}\int_{\Omega}H(|\nabla u|)dx \leq - \int_{\Gamma^R}\varphi u^{\lambda}u_td\sigma +\hat{c}\int_{\Omega}|u-v|^{\sigma}u_tdx \nonumber\\
		&\leq - \int_{\Gamma^R}\varphi |u|^{\lambda}|u_t|d\sigma+\hat{c}\int_{\Omega}|u+v|^{\sigma}|u_t|dx \nonumber\\
		&\leq -\frac{1}{\lambda + 1}\frac{d}{dt}\int_{\Gamma^R}|u|^{\lambda+1}\varphi d\sigma + \frac{1}{\lambda+1}\int_{\Gamma^R}|u|^{\lambda+1}\varphi_td\sigma +\nonumber\\&\hat{c}\int_{\Omega}|u|^{\sigma}|u_t|dx + \hat{c}\int_{\Omega}|v|^{\sigma}|u_t|dx \nonumber\\
		&\leq -\frac{1}{\lambda + 1}\frac{d}{dt}\int_{\Gamma^R}|u|^{\lambda+1}\varphi d\sigma + \frac{1}{\lambda+1}\int_{\Gamma^R}|u|^{\lambda+1}\varphi_td\sigma+ \frac{\hat{c}}{4\varepsilon}\int_{\Omega}|u|^{2\sigma}dx \nonumber\\&+ 2\varepsilon\hat{c}\int_{\Omega}|u_t|^2dx + \frac{\hat{c}}{4\varepsilon}\int_{\Omega}|v|^{2\sigma}dx\nonumber\\
		&\leq -\frac{1}{\lambda + 1}\frac{d}{dt}\int_{\Gamma^R}|u|^{\lambda+1}\varphi d\sigma + \frac{1}{\lambda+1}\int_{\Gamma^R}|u|^{\lambda+1}\varphi_td\sigma\nonumber\\&+\frac{\hat{c}}{2\varepsilon}|\Omega| + \frac{\hat{c}}{4\varepsilon}\int_{\Omega}|u|^{\alpha}dx + \frac{\hat{c}}{4\varepsilon}\int_{\Omega}|v|^2dx+ 2\varepsilon\hat{c}\int_{\Omega}|u_t|^2dx.
		\end{align}
		Multiplying the inequality \eqref{ineH_1} by $\lambda+1$ and applying Young's inequality to the term $\frac{1}{\lambda+1}\int_{\Gamma^R}|u|^{\lambda+1}\varphi_td\sigma$ yield the estimate
		\begin{align}\label{ineH_2}
		\frac{\lambda+1}{\lambda}\int_{\Omega}|u|^{1-\lambda}|(u^{\lambda})_t|^2dx + \frac{d}{dt}\left(\frac{\lambda+1}{2}\int_{\Omega}H(|\nabla u|)dx + \int_{\Gamma^R}|u|^{\lambda+1}\varphi d\sigma\right) \nonumber\\\leq \int_{\Gamma^R}|u|^{\alpha}d\sigma + \int_{\Gamma^R}|\varphi_t|^{\frac{\alpha}{\alpha-\lambda-1}}d\sigma 
		+\frac{\hat{c}}{2\varepsilon}|\Omega|(\lambda+1) + (\lambda+1)\frac{\hat{c}}{4\varepsilon}\int_{\Omega}|u|^{\alpha}dx \nonumber\\+ (\lambda+1)\frac{\hat{c}}{4\varepsilon}\int_{\Omega}|v|^2dx+ (\lambda+1)2\varepsilon\hat{c}\int_{\Omega}|u_t|^2dx.
		\end{align}
		Using the trace inequality \eqref{trace-theo}, we obtain
		\begin{align}\label{trace_1}
		\int_{\Gamma^R}|u|^{\alpha}d\sigma \leq 2\tilde{\varepsilon} \int_{\Omega}|\nabla u|^{2-a}|u|^{\alpha+\delta-2}dx + C\|u\|_{L^{\alpha}(\Omega)}^{\alpha} \nonumber\\+ C\tilde{\varepsilon}^{-\frac{1}{1-a}}\|u\|_{L^{\alpha}(\Omega)}^{\alpha+\mu_0} + C\tilde{\varepsilon}^{-\mu_2}\|u\|_{L^{\alpha}(\Omega)}^{\alpha+\mu_1}.
		\end{align}
		
		By \eqref{trace_1} and \eqref{ineH_2}, we obtain
		\begin{align}
		\frac{\lambda+1}{\lambda}\int_{\Omega}|u|^{1-\lambda}|(u^{\lambda})_t|^2dx + \frac{d}{dt}\left(\frac{\lambda+1}{2}\int_{\Omega}H(|\nabla u|)dx + \int_{\Gamma^R}|u|^{\lambda+1}\varphi d\sigma\right) \nonumber\\ \leq 2\tilde{\varepsilon} \int_{\Omega}|\nabla u|^{2-a}|u|^{\alpha+\delta-2}dx + C\|u\|_{L^{\alpha}(\Omega)}^{\alpha} + C\tilde{\varepsilon}^{-\frac{1}{1-a}}\|u\|_{L^{\alpha}(\Omega)}^{\alpha+\mu_0} + C\tilde{\varepsilon}^{-\mu_2}\|u\|_{L^{\alpha}(\Omega)}^{\alpha+\mu_1}\nonumber\\
		+ \int_{\Gamma^R}|\varphi_t|^{\frac{\alpha}{\alpha-\lambda-1}}d\sigma 
		+\frac{\hat{c}}{2\varepsilon}|\Omega|(\lambda+1) + (\lambda+1)\frac{\hat{c}}{4\varepsilon}\int_{\Omega}|u|^{\alpha}dx \nonumber\\+ (\lambda+1)\frac{\hat{c}}{4\varepsilon}\int_{\Omega}|v|^2dx+ (\lambda+1)2\varepsilon\hat{c}\int_{\Omega}|u_t|^2dx.
		\end{align}
		We denote $$\Lambda(t):= \frac{\lambda+1}{2}\int_{\Omega}H(|\nabla u|)dx  + \int_{\Omega}|u|^{\alpha}dx \text{ for all } t \in S.$$
		Note that
		$$\Lambda(0):= \frac{\lambda+1}{2}\int_{\Omega}H(|\nabla u_0|)dx  + \int_{\Omega}|u_0|^{\alpha}dx.$$
		Then, by using \eqref{main_0}, we have the following estimate
		\begin{align}
		\frac{\lambda+1}{\lambda}\int_{\Omega}|u|^{1-\lambda}|(u^{\lambda})_t|^2dx - 2\varepsilon\hat{c}\frac{\lambda+1}{\lambda}\int_{\Omega}|u|^{2(1-\lambda)}|(u^{\lambda})_t|^2dx  + \frac{d}{dt}\Lambda(t) \nonumber\\+(1-2\tilde{\varepsilon})\int_{\Omega}|u|^{\alpha+\delta-2}|\nabla u|^{2-a}dx \leq 
		\left(\frac{d_3(\alpha-\lambda) + \hat{c}}{C_2}+\frac{(\lambda+1)\hat{c}}{2\varepsilon}\right)|\Omega| \nonumber\\+ \left(\frac{3}{2C_2}\hat{c}+\frac{d_3(\alpha-\lambda)}{C_2} + C + \frac{(\lambda+1)\hat{c}}{4\varepsilon}\right)\|u\|_{L^{\alpha}(\Omega)}^{\alpha}+ \left(\frac{\hat{c}}{2C_2} + \frac{\hat{c}(\lambda+1)}{4\varepsilon}\right)\|v\|_{L^2(\Omega)}^2 \nonumber\\+ C\tilde{\varepsilon}^{-\frac{1}{1-a}}\|u\|_{L^{\alpha}(\Omega)}^{\alpha+\mu_0} + C\tilde{\varepsilon}^{-\mu_2}\|u\|_{L^{\alpha}(\Omega)}^{\alpha+\mu_1} + \int_{\Gamma^R}|\varphi_t|^{\frac{\alpha}{\alpha-\lambda-1}}d\sigma. \nonumber
		\end{align}
		Based on ($A_3$), after choosing $\tilde{\varepsilon}=\frac{1}{4}$ and $\varepsilon=\frac{1}{4\hat{c}}$, we obtain
		\begin{align}\label{ine-1}
		\int_{\Omega}|(u^{\lambda})_t|^2dx + \frac{d}{dt}\Lambda(t) &\leq C(\hat{c},\lambda, a) \Big[1+ \|u\|_{L^{\alpha}(\Omega)}^{\alpha} +\|v\|_{L^2(\Omega)}^2 \nonumber\\& +\|u\|_{L^{\alpha}(\Omega)}^{\alpha+\mu_0} + \|u\|_{L^{\alpha}(\Omega)}^{\alpha+\mu_1} +  \int_{\Gamma^R}|\varphi_t|^{\frac{\alpha}{\alpha-\lambda-1}}d\sigma\Big]. 
		\end{align}
		We denote $\beta:=\alpha+\mu_1$, and $\beta$ is the maximum allowed power of $\|u\|_{L^{\alpha}(\Omega)}$ when considering the right hand side of \eqref{ine-1}. Now, using the inequality \eqref{pre-ine-5} leads to
		\begin{align}\label{ine-2}
		\int_{\Omega}|(u^{\lambda})_t|^2dx + \frac{d}{dt}\Lambda(t) \leq C( \hat{c}, \lambda,a)\Bigg[1 + (1+\|u\|_{L^{\alpha}(\Omega)}^{\alpha})^{\beta} \nonumber\\+ \|v\|_{L^2(\Omega)}^2 + \int_{\Gamma^R}|\varphi_t|^{\frac{\alpha}{\alpha-\lambda-1}}d\sigma\Bigg].
		\end{align} 
		Integrating \eqref{ine-2} over the time interval $(0,t)$, we are led to
		\begin{align}\label{time-1}
		\int_{0}^{t}\int_{\Omega}|(u^{\lambda})_t|^2dx + \frac{\lambda+1}{2}\int_{\Omega}H(|\nabla u|)dx + \int_{\Omega}|u|^{\alpha}dx \leq C(\hat{c}, \lambda,a)\Bigg[\Lambda(0) \nonumber\\ + \int_{0}^{t}(1+\|u\|_{L^{\alpha}(\Omega)}^{\alpha})^{\beta}ds + \int_{0}^{t}\|v\|_{L^2(\Omega)}^2ds + \int_{0}^{t}\int_{\Gamma^R}|\varphi_t|^{\frac{\alpha}{\alpha-\lambda-1}}d\sigma ds\Bigg].
		\end{align} 
		This fact also implies 
		\begin{align}
		\int_{0}^{t}\int_{\Omega}|(u^{\lambda})_t|^2dx + \int_{\Omega}H(|\nabla u|)dx + \int_{\Omega}|u|^{\alpha}dx \leq C( \hat{c}, \lambda,a)\Bigg[\Lambda(0) \nonumber\\ + \int_{0}^{t}(1+\|u\|_{L^{\alpha}(\Omega)}^{\alpha})^{\beta}ds + \int_{0}^{t}\|v\|_{L^2(\Omega)}^2ds + \int_{0}^{t}\int_{\Gamma^R}|\varphi_t|^{\frac{\alpha}{\alpha-\lambda-1}}d\sigma ds\Bigg].
		\end{align}
		Employing \eqref{H-ine1} yields
		\begin{align}\label{gra_u}
		\int_{0}^{t}\int_{\Omega}|(u^{\lambda})_t|^2dx + \int_{\Omega}|\nabla u|^{2-a}dx \leq C( \hat{c}, \lambda,a)\Bigg[\Lambda(0) + \int_{0}^{t}(1+\|u\|_{L^{\alpha}(\Omega)}^{\alpha})^{\beta}ds \nonumber\\+ \int_{0}^{t}\|v\|_{L^2(\Omega)}^2ds + \int_{0}^{t}\int_{\Gamma^R}|\varphi_t|^{\frac{\alpha}{\alpha-\lambda-1}}d\sigma ds\Bigg].
		\end{align}
		
		Now, we consider the sub-problem ($P_2$): Take a fixed $u\in L^2(S; L^{\alpha}(\Omega))$. Multiplying the first equation of $(P_2)$ by $v_t$ and then integrating the result over $\Omega$, we have
		\begin{align}
		\int_{\Omega}|v_t|^2 dx + \frac{K_2}{2}\frac{d}{dt}\int_{\Omega}|\nabla v|^2 dx = \int_{\Omega}b(u-v)v_tdx.
		\end{align}
		By ($A_2$) together with \eqref{pre-ine-2} and \eqref{pre-ine-5}, it results
		\begin{align}\label{ine_gr2_1}
		\int_{\Omega}|v_t|^2 dx + \frac{K_2}{2}\frac{d}{dt}\int_{\Omega}|\nabla v|^2 dx &\leq \hat{c}\int_{\Omega}|u-v|^{\sigma}|v_t|dx \nonumber\\
		&\leq \hat{c}\int_{\Omega}|u+v|^{\sigma}|v_t|dx \nonumber\\
		&\leq \hat{c}\int_{\Omega}|u|^{\sigma}|v_t| dx + \hat{c}\int_{\Omega}|v|^{\sigma}|v_t|dx \nonumber\\
		&\leq \frac{\hat{c}}{4\varepsilon}\int_{\Omega}|u|^{2\sigma}dx  + \frac{\hat{c}}{4\varepsilon}\int_{\Omega}|v|^{2\sigma}dx + 2\hat{c}\varepsilon\int_{\Omega}|v_t|^2dx \nonumber\\
		&\leq \frac{\hat{c}}{2\varepsilon}|\Omega| + \frac{\hat{c}}{4\varepsilon}\int_{\Omega}|u|^{\alpha}dx \nonumber\\
		&+ \frac{\hat{c}}{4\varepsilon}\int_{\Omega}|v|^2dx + 2\hat{c}\varepsilon\int_{\Omega}|v_t|^2dx.
		\end{align}
		If we choose $\varepsilon = \frac{1}{4\hat{c}}$ such that $1-2\varepsilon\hat{c}>0$, then \eqref{ine_gr2_1} becomes
		\begin{align}
		\frac{1}{2}\int_{\Omega}|v_t|^2dx + \frac{K_2}{2}\frac{d}{dt}\int_{\Omega}|\nabla v|^2dx \leq 2\hat{c}^2|\Omega| + \hat{c}^2\int_{\Omega}|u|^\alpha dx + \hat{c}^2\int_{\Omega}|v|^2dx.
		\end{align}
		Putting $C_5:=\min\left\{\frac{1}{2},\frac{K_2}{2}\right\}$, we get the following estimate
		\begin{align}\label{ine_gr2_2}
		\int_{\Omega}|v_t|^2dx + \frac{d}{dt}\int_{\Omega}|\nabla v|^2dx \leq \frac{\hat{c}^2}{C_2}|\Omega|+\frac{\hat{c}^2}{C_2}\int_{\Omega}|u|^{\alpha}dx + \frac{\hat{c}^2}{C_2}\int_{\Omega}|v|^2dx.  
		\end{align}
		Applying \eqref{ine_main1} to the right hand side of \eqref{ine_gr2_2}, we obtain
		\begin{align}\label{ine_gr2_3}
		\int_{\Omega}|v_t|^2dx + \frac{d}{dt}\int_{\Omega}|\nabla v|^2dx \leq \frac{\hat{c}^2}{C_2}|\Omega| + \frac{\hat{c}^2}{2C_2}e^{C_1t}\left(1+\|u_0\|_{L^{\alpha}(\Omega)}^{\alpha} + \|v_0\|_{L^2(\Omega)}^2\right).
		\end{align}
		The inequality \eqref{ine_gr2_3} implies
		\begin{align}\label{time-2}
		\int_{\Omega}|v_t|^2dx \leq \frac{\hat{c}^2}{C_2}|\Omega| + \frac{\hat{c}^2}{2C_2}e^{C_3t}\left(1+\|u_0\|_{L^{\alpha}(\Omega)}^{\alpha} + \|v_0\|_{L^2(\Omega)}^2\right).
		\end{align}
		On the other hand, \eqref{ine_gr2_3} also leads to
		\begin{align}\label{ine_gr_4}
		\frac{d}{dt}\int_{\Omega}|\nabla v|^2dx \leq \frac{\hat{c}^2}{C_2}|\Omega| + \frac{\hat{c}^2}{2C_2}e^{C_3t}\left(1+\|u_0\|_{L^{\alpha}(\Omega)}^{\alpha} + \|v_0\|_{L^2(\Omega)}^2\right).
		\end{align}
		Integrating \eqref{ine_gr_4} over the time interval $(0,t)$, we obtain the upper bound
		\begin{align}\label{gra_v}
		\int_{\Omega}|\nabla v(t)|^2dx &\leq \int_{\Omega}|\nabla v_0|^2dx + \frac{\hat{c}^2}{C_2}|\Omega|t \nonumber\\&+ \frac{\hat{c}^2}{2C_2} e^{C_3t}\left(1+\|u_0\|_{L^\alpha(S; L^{\alpha}(\Omega))}^{\alpha} + \|v_0\|_{L^2(S; L^2(\Omega))}^2\right).
		\end{align}
		Combining \eqref{gra_u} and \eqref{gra_v}, we obtain
		\begin{align}
		\int_{\Omega}|\nabla u|^{2-a}dx + \int_{\Omega}|\nabla v|^2dx &\leq C( \hat{c}, \lambda,a)\Bigg[\Lambda(0) + \int_{0}^{t}(1+\|u\|_{L^{\alpha}(\Omega)}^{\alpha})^{\beta}ds \nonumber\\&+ \int_{0}^{t}\|v\|_{L^2(\Omega)}^2ds + \int_{0}^{t}\int_{\Gamma^R}|\varphi_t|^{\frac{\alpha}{\alpha-\lambda-1}}d\sigma ds\Bigg] \nonumber\\
		&+ \int_{\Omega}|\nabla v_0|^2dx+ \frac{\hat{c}^2}{C_2}|\Omega|t \nonumber\\
		&+ \frac{\hat{c}^2}{2C_2} e^{C_3t}\left(1+\|u_0\|_{L^\alpha(S; L^{\alpha}(\Omega))}^{\alpha} + \|v_0\|_{L^2(S; L^2(\Omega))}^2\right) .
		\end{align}
		Combining \eqref{time-1} and \eqref{time-2}, we obtain
		\begin{align}
		\int_{\Omega}|(u^{\lambda})_t|^2dx + \int_{\Omega}|v_t|^2dx \leq C\Big[1 + (1+\|u\|_{L^{\alpha}(\Omega)}^{\alpha})^{\beta} + \|v\|_{L^2(\Omega)}^2 + \int_{\Gamma^R}|\varphi_t|^{\frac{\alpha}{\alpha-\lambda-1}}d\sigma\Big] \nonumber\\+ \frac{\hat{c}^2}{C_2}|\Omega| + \frac{\hat{c}^2}{2C_2}e^{C_3t}\left(1+\|u_0\|_{L^{\alpha}(\Omega)}^{\alpha} + \|v_0\|_{L^2(\Omega)}^2\right).
		\end{align}
		This completes the proof of the theorem.
	\end{proof}
	\section{Proof of Theorem \ref{existence-theo}}\label{existence}
	\begin{proof}
		By using Schauder's fixed point argument (see e.g. Theorem \ref{Schauder-theo}), we show that there exist a pair $(u,v)$ of weak solutions to problem \eqref{main_eq} in the sense of Definition \ref{dn-weak}. First of all, let us define the operators:
		\begin{align*}
		\mathcal{T}_1: L^2(S;L^{\alpha}(\Omega)\cap W^{1, 2-a}(\Omega)) \longrightarrow L^2(S;L^2(\Omega))
		\end{align*} 
		by $\mathcal{T}_1(u)=v$ and
		\begin{align*}
		\mathcal{T}_2: L^2(S; L^2(\Omega)) \longrightarrow L^\alpha(S;L^\alpha(\Omega)\cap W^{1, 2-a}(\Omega))
		\end{align*}
		by $\mathcal{T}_2(v)=w$. Then, consider the operator $\mathcal{T}: L^\alpha(S;L^{\alpha}(\Omega)) \longrightarrow L^\alpha(S;L^\alpha(\Omega)\cap W^{1, 2-a}(\Omega))$ defined by
		\begin{align}
		\mathcal{T}(w) = \mathcal{T}_2(\mathcal{T}_1(u))=w.
		\end{align}
		Indeed, the estimates reported in Proposition \ref{prop1} and Proposition \ref{prop2} imply that the operators $\mathcal{T}_1$ and $\mathcal{T}_2$ are well-defined. Hence, the operator $\mathcal{T}$ is also well-defined. 
		
		In order to show the existence of solution to the problem \eqref{main_eq}, we wish to show that $\mathcal{T}$ admits a fixed point. Then, using Schauder's fixed point Theorem \ref{Schauder-theo}, we shall prove that there exits a set $B$ such that
		\begin{itemize}
			\item[(1)] $\mathcal{T}: B \rightarrow B$ is a compact operator;
			\item[(2)] $B$ is convex, closed, bounded set such that $\mathcal{T}(B) \subset B$.
		\end{itemize}
		In particular, to obtain the compactness of $\mathcal{T}=\mathcal{T}_2\circ \mathcal{T}_1$, it is sufficient to demonstrate that $\mathcal{T}_1$ is compact and that $\mathcal{T}_2$ is continuous.
		
		Recall that we have
		\begin{align*}
		\mathcal{T}_1: L^2(S;L^{\alpha}(\Omega)\cap W^{1, 2-a}(\Omega)) \longrightarrow L^2(S;L^2(\Omega)).
		\end{align*}
		We assume in Proposition \ref{prop1} that for given $u\in L^{\alpha}(S;L^{\alpha}(\Omega))$, we obtain $\mathcal{T}_1(u) = v \in L^2(S; W^{1,2}(\Omega))$ with $v_t\in L^2(S;L^2(\Omega))$. Hence,
		$$\mathcal{T}_1(L^2(S;L^{\alpha}(\Omega)\cap L^2(\Omega))) \subset V,$$
		where $$V=\{\varphi: \varphi\in L^2(S; L^{\alpha}(\Omega)\cap W^{1,2-a}(\Omega)), \partial_t \varphi \in L^2(S; L^2(\Omega))\}.$$
		By using Rellich-Kondrachov's Theorem (of Theorem \ref{rellich-theo}), we obtain
		$$L^{\alpha}(\Omega)\cap W^{1,2-a}(\Omega) \hookrightarrow\hookrightarrow L^2(\Omega).$$
		Applying Theorem \ref{Aubin-theo} gives $V\hookrightarrow\hookrightarrow L^2(S; L^2(\Omega))$. Thus, for any bounded set $M \subset L^2(S; L^2(\Omega))$, then we have $\mathcal{T}_1(M) \subset V$. Since $V$ is compactly embedded in $L^2(S; L^2(\Omega))$, then we have $\mathcal{T}_1(M)$ is precompact in $L^2(S; L^2(\Omega))$. Therefore, $\mathcal{T}_1$ is a  compact operator. 
		
		Now, we prove that $\mathcal{T}_2$ is sequentially continuous. We proceed in a similar manner as in   \cite{Arendt2010}. We recall first that
		$$\mathcal{T}_2: L^2(S; L^2(\Omega)) \longrightarrow L^\alpha(S; L^\alpha(\Omega) \cap W^{1, 2-a}(\Omega)).$$
		Let $v_n \longrightarrow v$ in $L^2(S; L^2(\Omega))$ as $n \longrightarrow \infty$ and with $u_n=\mathcal{T}(v_n)$ and $u=\mathcal{T}(v)$. We show that $u_n \longrightarrow u$ in $L^{\alpha}(S; L^\alpha(\Omega))$ as $n \longrightarrow \infty$.
		
		We denote $$E:=\{\varphi: \varphi \in L^\alpha(S; L^\alpha(\Omega)) \cap L^{2-a}(S; W^{1,2-a}(\Omega))\}.$$
		Since $L^\alpha(\Omega)$ and $W^{1,2-a}(\Omega)$ are reflexive Banach spaces, then also the Bochner spaces $L^\alpha(S; L^\alpha(\Omega))$ 
		and $L^{2-a}(S; W^{1,2-a}(\Omega))$ are reflexive Banach spaces. Thus, $E$ is a reflexive Banach space. We know that from a bounded sequence in a reflexive Banach space $E$, one can extract a subsequence that converges weakly in $E$ in the weak topology (cf. Theorem $3.18$, \cite{Brezis2010}). Indeed, since we have $(u_n)$ is bounded in $E$ and $v_n \longrightarrow v$ as $n\rightarrow \infty$ in $L^2(S; L^2(\Omega))$, then we have $u_n \rightharpoonup u$ as $n\rightarrow \infty$ in $E$.
		
		By the estimates \eqref{ine_main1}, \eqref{ine-main3} and \eqref{ine-main4}, we can extract two subsequences $(u_{n_k})$ and $(v_{n_k})$, still labeled by $n$ instead of $n_k$ for simplicity, such that as $n \to \infty$ it holds:
		\begin{align}
		\nabla v_n &\rightharpoonup \nabla v \text{ in } L^2(S; L^2(\Omega)), \nonumber\\
		(v_n)_t &\rightharpoonup v_t \text{ in } L^2(S; L^2(\Omega)), \nonumber\\
		u_n &\rightharpoonup u \text{ in } E,\nonumber\\
		(u_n^\lambda)_t &\rightharpoonup (u^\lambda)_t \text{ in } L^2(S; L^2(\Omega)),\nonumber\\
		\nabla u_n &\rightharpoonup \nabla u \text{ in } L^2(S; W^{1,2-a}(\Omega)). \nonumber
		\end{align} 
		Then, by using ($A_1$) and ($A_2$) and the fact that $b(u_n-v_n)$ and $K_1(|\nabla u_n|)|\nabla u_n|^2$ are bounded in $E$, it leads to
		\begin{align}
		b(u_n-v_n) &\rightharpoonup b(u-v) \text{ in } E,  \nonumber\\
		K_1(|\nabla u_n|)|\nabla u_n|^2 &\rightharpoonup K_1(|\nabla u|)|\nabla u|^2 \text{ in } E. \nonumber
		\end{align}
		We re-write \eqref{main_eq} formulated for the sequences $(u_n) \in L^\alpha(S; L^{\alpha}(\Omega))$ and $(v_n) \in L^2(S; L^2(\Omega))$
		\begin{align}\label{main_eq2}
		\begin{cases}
		\partial_t (u_n^\lambda) + \mathrm{div}(-K_1(|\nabla u_n|)\nabla u_n) = -b(u_n-v_n) \ \text{ in }  S\times \Omega,\\
		\partial_t v_n - K_2\Delta v_n =b(u_n-v_n) \ \text{ in } S \times\Omega, \\
		-K_1(|\nabla u_n|)\nabla u_n \cdot {\bf n} = \varphi u_n^\lambda \ \text{ at } S\times \Gamma^R,\\
		-K_1(|\nabla u_n|)\nabla u_n\cdot {\bf n} = 0 \ \text{ at } S\times \Gamma^N,\\
		-K_2\nabla v_n \cdot {\bf n} = 0 \ \text{ at } S \times\partial\Omega, \\
		u_n(t=0,x) = u_{0_n}(x), \  x\in\bar{\Omega}, \\
		v_n(t=0,x) = v_{0_n}(x), \ x\in  \bar{\Omega}.
		\end{cases}
		\end{align}
		Clearly, if $n \longrightarrow \infty$ in the weak form of \eqref{main_eq2} recovers the weak form of \eqref{main_eq}. Essentially,
		we have shown that $u_n \rightharpoonup u$ in $E$. Moreover, the embedding $E \hookrightarrow L^\alpha(S; L^\alpha(\Omega))$ is compact, this implies that $u_n \longrightarrow u$ in $L^\alpha(S; L^\alpha(\Omega))$. Therefore, $\mathcal{T}_2$ is continuous.
		
		Let us fix $K>0$ to be specified later and we denote by $B_K$ the collection of functions $u \in L^\alpha(S; L^\alpha(\Omega) \cap W^{1,2-a}(\Omega))$ such that
		\begin{align*}
		\max\{\|u\|_{L^\alpha(S; L^\alpha(\Omega))}, \|\nabla u\|_{L^{2-a}(S; L^{2-a}(\Omega))}\} \leq K.
		\end{align*}
		For each choice of $K$, the set $$B_K\subset L^\alpha(S; L^\alpha(\Omega) \cap W^{1,2-a}(\Omega))$$
		is convex, closed, and bounded. We aim to show that we may select a $K>0$ such that $$\mathcal{T}(B_K) \subset B_K.$$
		Indeed, by using the estimates \eqref{ine_main1}, \eqref{ine-main3} as well as the fact that $$L^2(S; L^\alpha(\Omega) \cap W^{1,2-a}(\Omega)) \subset \left(L^\alpha(S;L^\alpha(\Omega))\cap L^{2-a}(S;W^{1,2-a}(\Omega))\right)$$ together with knowing that $\mathcal{T}_1(B_K)$ is bounded subset of $L^2(S; L^2(\Omega))$ and that $\mathcal{T}_2(\mathcal{T}_1(u))$ is bounded subset of $L^{\alpha}(S; L^\alpha(\Omega)\cap W^{1,2-a}(\Omega))$, we have
		\begin{align*}
		\max\{\|u\|_{L^\alpha(S; L^\alpha(\Omega))}, \|\nabla u\|_{L^{2-a}(S; L^{2-a}(\Omega))}\} \leq K.
		\end{align*}
		Here $K>0$ is chosen such that
		\begin{align}
		K&:=\max\Big\{e^{C_3T}\left(1+\|u_0\|_{L^\alpha(0,T; L^\alpha(\Omega))}^{\alpha} + \|v_0\|_{L^2(0,T;L^2(\Omega))}^2\right),
		\nonumber\\ 
		&C( \hat{c}, \lambda,a)\Big[\Lambda(0) + \int_{0}^{t}(1+\|u\|_{L^{\alpha}(\Omega)}^{\alpha})^{\beta}ds 
		\nonumber\\
		&+ \int_{0}^{t}\|v\|_{L^2(\Omega)}^2ds + \int_{0}^{t}\int_{\Gamma^R}|\varphi_t|^{\frac{\alpha}{\alpha-\lambda-1}}d\sigma ds\Big] + \int_{\Omega}|\nabla v_0|^2dx \nonumber\\
		&+ \frac{\hat{c}^2}{C_2}|\Omega|t + \frac{\hat{c}^2}{2C_2} e^{C_3t}\left(1+\|u_0\|_{L^\alpha(0,T; L^{\alpha}(\Omega))}^{\alpha} + \|v_0\|_{L^2(0,T; L^2(\Omega))}^2\right) \Big\}.
		\end{align}
		Hence, $\mathcal{T}(B_K) \subset B_K$.
		
		We have shown that $\mathcal{T}: B_k \longrightarrow B_k$ is a compact operator with $B_K$ a convex, closed, bounded set and also that $\mathcal{T}(B_K) \subset B_K$. 
		Then, by Theorem \ref{Schauder-theo} there exists at least a pair $$(u,v) \in \left(L^\alpha(S;L^\alpha(\Omega))\cap L^{2-a}(S;W^{1,2-a}(\Omega))\right) \times L^2(S;W^{1,2}(\Omega))$$ satisfying the problem \eqref{main_eq} in the sense of Definition \ref{dn-weak}.
	\end{proof}

		\begin{center}
		\bibliographystyle{alpha}
		\bibliography{mybib}

\begin{thebibliography}{CMRT19}

\bibitem[ABHI09]{Aulisa2009}
E.~Aulisa, L.~Bloshanskaya, L.~Hoang, and A.~Ibragimov.
\newblock Analysis of generalized {F}orchheimer flows of compressible fluids in
  porous media.
\newblock {\em Journal of Mathematical Physics}, 50:103102, 2009.

\bibitem[AC10]{Arendt2010}
W.~Arendt and R.~Chill.
\newblock Global existence for quasilinear diffusion equations in isotropic
  nondivergence form.
\newblock {\em Ann. Scuola Norm. Sup. Pisa Cl. Sci.}, IX:523--539, 2010.

\bibitem[Aub63]{Aubin1963}
J.~P. Aubin.
\newblock Un th\'{e}oreme de compacit\'{e}.
\newblock {\em C R Acad Sci Paris}, 256:5042--5044, 1963.

\bibitem[Bre11]{Brezis2010}
H.~Brezis.
\newblock {\em Functional Analysis, Sobolev Spaces and Partial Differential
  Equations}.
\newblock Springer, 2011.

\bibitem[BZK60]{Barenblatt1960}
G.~I. Barenblatt, Iu.~P. Zheltov, and I.~N. Kochina.
\newblock Basic concepts in the theory of seepage of homogeneous liquids in
  fissured rocks.
\newblock {\em PMM}, 24:852--864, 1960.

\bibitem[CCMT19]{Cirillo2019}
E.~N.~M. Cirillo, M.~Colangeli, A.~Muntean, and T.~K.~T. Thieu.
\newblock A lattice model for active-passive pedestrian dynamics: a quest for
  drafting effects.
\newblock accepted to Mathematical Biosciences and Engineering
  (arXiv:1907.08621), 2019.

\bibitem[CHK16]{Celik2016}
E.~Celik, L.~Hoang, and T.~Kieu.
\newblock Generalized {F}orchheimer flows of isentropic gases.
\newblock {\em J. Math. Fluid Mech.}, 20:83--115, 2016.

\bibitem[CMRT19]{Colangeli2019}
M.~Colangeli, A.~Muntean, O.~Richardson, and T.~K.~T. Thieu.
\newblock {\em Modelling interactions between active and passive agents moving
  through heterogeneous environments}, volume 1: Theory, Models and Safety
  Problems,.
\newblock in G. Libelli, N. Bellomo (Eds), Crowd Dynamics, Modeling and
  Simulation in Science, Engineering and Technology, Boston, Birkhauser,
  Springer, 2019.

\bibitem[Eva98]{Evans1998}
L.~C. Evans.
\newblock {\em Partial Differential Equations}.
\newblock American Mathematical Society, 1998.

\bibitem[HI11]{Hoang2011}
L.~Hoang and A.~Ibragimov.
\newblock Structural stability of generalized {F}orchheimer equations for
  compressible fluids in porous media.
\newblock {\em Nonlinearity}, 24:1--41, 2011.

\bibitem[LLPW11]{Lions2011}
J.~L. Lions, D.~Lukkassen, L.~E. Persson, and P.~Wall.
\newblock Reiterated homogenization of nonlinear monotone operators.
\newblock {\em Chinese Annals of Mathematics}, 22:1--12, 2011.

\bibitem[LMR18]{Lind2018}
M.~Lind, A.~Muntean, and O.~M. Richardson.
\newblock Well-posedness and inverse {R}obin estimate for a multiscale
  elliptic/parabolic system.
\newblock {\em Applicable Analysis}, 97:89--106, 2018.

\bibitem[LVAS68]{Ladyzenskaja1968}
O.~A. Ladyzenskaja and N.~N.~Uralceva V.~A.~Solonnikov.
\newblock {\em Linear and Quasilinear Equations of Parabolic Type}, volume~23.
\newblock American Mathematical Society, 1968.

\bibitem[MNR10]{Muntean2010}
A.~Muntean and M.~Neuss-Radu.
\newblock A multiscale {G}alerkin approach for a class of nonlinear coupled
  reaction-diffusion systems in a complex media.
\newblock {\em Journal of Mathematical Analysis and Applications},
  371:705--718, 2010.

\bibitem[RJM19]{Richardson2019}
O.~Richardson, A.~Jalba, and A.~Muntean.
\newblock The effect of environment knowledge in evacuation scenarios involving
  fire and smoke – a multiscale modelling and simulation approach.
\newblock {\em Fire Technology}, 55:415--436, 2019.

\bibitem[Zei86]{Zeidler1986}
E.~Zeidler.
\newblock {\em Nonlinear {F}unctional {A}nalysis and its {A}pplications},
  volume~1.
\newblock 9th ed. Springer-Verlag, 1986.

\end{thebibliography}
	\end{center}

\end{document}